\newcommand{\abs}[1]{\left\vert#1\right\vert}
\newcommand{\norm}[1]{\left\Vert#1\right\Vert}
\renewcommand{\P}{{\cal P}}
\def\R{\mathbb{R}}
\newcommand{\Po}{\P_1}            %%% with finite first moment
\newcommand{\Pc}{\P_c}            %%% with compact support
\def\tf{\tilde f}
\def\grad{\nabla}
\DeclareMathOperator{\dv}{div}
\DeclareMathOperator{\supp}{supp}
\DeclareMathOperator{\Lip}{Lip}
\def\dive{\mathrm{div}}
\def\calT{\mathcal{T}}
\def\rmL{\mathrm{L}}
\def\lip{\mathrm{Lip}}
\def\a{\alpha}
\def\b{\beta}
\newtheorem{thm}{Theorem}[section]
\newtheorem{cor}[thm]{Corollary}
\newtheorem{lem}[thm]{Lemma}
\newtheorem{prp}[thm]{Proposition}
\newtheorem{hyp}[thm]{Hypothesis}
\newtheorem{dfn}[thm]{Definition}
\newtheorem{remark}[thm]{Remark}
\newtheorem{rem}[thm]{Remark}
\begin{document}

\author{\textsc{Jos\'e A. Ca\~nizo}\thanks{Departament de Matem\`atiques,
  Universitat Aut\`onoma de Barcelona,
  08193 Bellaterra, Spain.
  E-mail: \texttt{canizo@mat.uab.es}}
  \and
  \textsc{Jos\'e A. Carrillo}\thanks{ICREA - Departament de Matem\`atiques,
  Universitat
  Aut\`onoma de Barcelona,
  08193 Bellaterra, Spain.
  E-mail: \texttt{carrillo@mat.uab.es}}
  \and
  \textsc{Jes\'us Rosado}\thanks{Departament de Matem\`atiques,
  Universitat
  Aut\`onoma de Barcelona,
  08193 Bellaterra, Spain.
  E-mail: \texttt{jrosado@mat.uab.es}}
}

\title{A well-posedness theory in measures for some kinetic models
of collective motion}

\date{April 12th 2010}

\maketitle

\begin{abstract}
  We present existence, uniqueness and continuous dependence results
  for some kinetic equations motivated by models for the collective
  behavior of large groups of individuals. Models of this kind have
  been recently proposed to study the behavior of large groups of
  animals, such as flocks of birds, swarms, or schools of fish. Our
  aim is to give a well-posedness theory for general models which
  possibly include a variety of effects: an interaction through a
  potential, such as a short-range repulsion and long-range
  attraction; a velocity-averaging effect where individuals try to
  adapt their own velocity to that of other individuals in their
  surroundings; and self-propulsion effects, which take into account
  effects on one individual that are independent of the others. We
  develop our theory in a space of measures, using mass transportation
  distances. As consequences of our theory we show also the
  convergence of particle systems to their corresponding kinetic
  equations, and the local-in-time convergence to the hydrodynamic
  limit for one of the models.
\end{abstract}

\noindent
\textbf{Keywords}:kinetic theory, measure solutions, interacting particle
systems, Monge-Kantorovich-Rubinstein distance, stability,
particle methods, swarming.
\medskip

\noindent
\textbf{AMS Subject Classification}: 35A05, 35B40, 82D99, 92D50.

\newpage

\section{Introduction}

The description of the collective motion (swarming) of multi-agent
aggregates resulting into large-scale structures is a striking
phenomena, as illustrated by the examples provided by birds, fish, bees or
ants. Explaining the emergence of these coordinated movements in terms
of microscopic decisions of each individual member of a swarm is a hot
matter of research in the natural sciences
\cite{camazine,couzin,parrish}. The formation of swarms and milling or
flocking patterns have been reported in animals with highly developed
social organization like insects (locusts, bees, ants, ...)
\cite{couzin}, fishes \cite{BTTYB,Bi} and birds
\cite{camazine,parrish} but also in micro-organisms as myxo-bacteria
\cite{kw}. Moreover, the understanding of natural swarms has been used
as an engineering design principle for unmanned artificial robots
operation \cite{BDT,pegoel09}.

The physics and applied mathematics literature has proliferated
and sprung in this direction in the recent years trying to model
these phenomena, mainly based on two strategies of description:
individual-based models or particle dynamics
\cite{vicek,parrish,LR,camazine,mogilner,chate,couzin,DCBC,CS1,CS2,LLE,LLE2}
and continuum models based on PDEs for the density or for the
momentum of the particle ensemble
\cite{parrish,TB04,Top06,CDMBC,EVL}. The key feature to explain is
the emergence of self-organization: flocking, milling, double
milling patterns or other coherent behavior.

Particle descriptions usually include three basic mechanisms in
different regions: short-range repulsion zone, long-range attraction
zone and alignment or orientation zone, leading to the so-called
\emph{three-zone models}. In addition, some of them incorporate a
mechanism for establishing a fixed asymptotic speed/velocity vector of
agents, as is usually observed in nature. Some of the models only
consider the orientation vector and not the speed in their discrete
version. The main differences of all these models reside in how these
three interactions are specifically considered. We will mainly work
with two generic examples in which several of the effects above are
included, namely the model for self-propelled interacting particles
introduced by D'Orsogna et al in \cite{DCBC} and the model of
alignment proposed by Cucker and Smale \cite{CS1,CS2}.

Together with particle and continuum models based on macroscopic
densities, there has been a very recent trend of mesoscopic models by
means of kinetic equations for swarming
\cite{HT08,CDP,HL08,cfrt09}. In these models one works with a
statistical description of the interacting agent system. Let us
represent by $x \in \R^d$ the position, where $d \geq 1$ stands for
the physical space dimension, and by $v \in \R^d$ the velocity. We are
interested in studying the evolution of $f = f(t,x,v)$ representing
the probability measure/density of individuals at position $x$, with
velocity $v$, and at time $t \geq 0$. These are the models we study in
the present paper. Given that we cover a variety of them, we refer the
reader to Section \ref{sec:bg} for a more detailed presentation of the
equations.

These kinetic models bridge the particle description of swarming to
the hydrodynamic one as already discussed in \cite{HT08,CDP,HL08}. The
main key idea is that solutions to particle systems are in fact
atomic-measure solutions for the kinetic equations, and solutions to
the hydrodynamic equations are solutions of a special form to the
kinetic equation; see Section \ref{sec:hydro} for more details.

In some cases, suitable compactness arguments based on the stability
properties in distances between probability measures allow to
construct a well-posedness theory for a kinetic equation. Such an
approach was done for the Vlasov equation in classical kinetic theory
\cite{neunzert,BH,dobru,spohn} with several nice reviews in
\cite{neunzert2,spohn2,Gol}. Of these references, \cite{dobru} uses
the Monge-Kantorovich-Rubinstein distance (the one we use in the
present paper); the others, as well as the recent work \cite{HL08} for
the kinetic Cucker-Smale model, use an approach based on the bounded
Lipschitz distance.

In this paper we present a generic approach to the well-posedness
of many of these models in the set of probability measures in
phase space based on the modern theory of optimal transport
\cite{Villani}. In fact, we will use the well-known
Monge-Kantorovich-Rubinstein distance between probability measures
instead of the bounded Lipschitz distance. Its better duality
properties actually make this technical approach easier in terms
of estimates leading to one of our crucial results: a stability
property of solutions to swarming equations under quite general
conditions.

We derive some consequences from this stability estimate. First, we
prove the mean-field limit, or convergence of the particle method
towards a measure solution of the kinetic equation. This mean field
limit is then established without any resorting to the BBGKY hierarchy
or the molecular chaos hypothesis \cite{Bo,CDP,HL08}. Second, we show
the stability for arbitrary times of the hydrodynamic solutions,
assuming they exist, although with constants depending on
time. Finally, the stability result can be used to obtain qualitative
properties of the measure solutions of the kinetic equations, as it
has been done in \cite{cfrt09} for the kinetic Cucker-Smale model.

This strategy is quite general, and we first demonstrate its use in a
particular kinetic model introduced in \cite{CDP} for dealing with the
mesoscopic description and certain patterns not covered by the
particle model proposed in \cite{DCBC}. Other models are treated by
the same procedure in subsequent sections, as the kinetic Cucker-Smale
model proposed in \cite{HT08} for the original alignment mechanism in
\cite{CS1,CS2}, the models studied in \cite{LLE,LLE2}, or any linear
combination of these mechanisms. We finally give general conditions on
a model that are sufficient for our well-posedness results to be
valid.

Let us comment on some limitations of the method we use. The first
one is that, as we work with solutions in a weak measure sense, we
have to require our interaction terms to be locally Lipschitz in
order in order to carry out the theory. This is a well-known
limitation in the literature for working with the mean field limit
and measure solutions, see \cite{spohn2,HJ07} and the references
therein. A less fundamental one is that we always work with
compactly supported solutions. One could probably develop a theory
substituting this condition by a suitable control on moments of
the solution, and then adapting the estimates to this setting;
however, in the present paper we do not pursue further extensions
in this direction.

Next section does a simple and brief review of the main interacting
particle systems under analysis and the needed concepts for the
Monge-Kantorovich-Rubinstein distance between probability
measures. The third section is devoted to the proof of the main result
of existence, uniqueness and stability of measure solutions to the
particular swarming equations introduced in \cite{CDP}. Section 4
generalizes this approach to a general family of these
equations. Finally, section 5 draws some consequences of the stability
property: the convergence of the particle method and the mean-field
limit are proved for the general model, while the stability in a
finite time interval of hydrodynamic solutions is shown for the
swarming model used in Section \ref{sec:swarming_model}.

\section{Preliminaries}
\label{sec:bg}

In this section we introduce the models mentioned in the
introduction. We give some particular representative cases and specify
the models to which our results apply. Also, we recall some notions
about optimal transport that shall come in handy.

\subsection{Main Kinetic Models}
The particle model proposed in \cite{DCBC} reads as:
\begin{equation*}
  \left\lbrace
    \begin{array}{ll}
      \displaystyle \frac{dx_i}{dt} = {v}_i,
      &\qquad (i = 1,\dots,N)
      \vspace{.3cm}
      \\
      \displaystyle \frac{dv_i}{dt} = (\alpha - \beta \,|{v}_i|^2) {v}_i
      - \frac1N \sum_{j \neq i } \nabla U (|x_i - x_j|),
      &\qquad (i = 1,\dots,N).
    \end{array}
  \right.
\end{equation*}
where $\alpha$, $\beta$ are nonnegative parameters,
$U:\R^d\longrightarrow \R$ is a given potential modeling the
short-range repulsion and long-range attraction typical in these
models, and $N$ is the number of particles. Here, the potential has
been scaled depending on the mass of each particle as in \cite{CDP},
where we refer for further discussion. The term corresponding to
$\alpha$ models the self-propulsion of individuals, whereas the term
corresponding to $\beta$ is the friction assumed to follow Rayleigh's
law. The balance of these two terms imposes an asymptotic speed to the
agent (if other effects are ignored), but does not influence the
orientation vector. A typical choice for $U$ is the Morse potential
which is radial and given by
$$
U(x)=k(|x|) \qquad \mbox{with} \qquad k(r) = -C_A e^{-r /\ell_A} +
C_R e^{-r /\ell_R},
$$
where $C_A, C_R$ and $\ell_A, \ell_R$ are the strengths and the
typical lengths of attraction and repulsion, respectively. This
potential does not satisfy the smoothness assumption in our main
theorems but the qualitative behavior of the particle system does not
depend on this particular fact \cite{DCBC}. In fact, a typical
potential satisfying all of our hypotheses is
$$
U(x) = -C_A e^{-|x|^2 /\ell_A^2} + C_R e^{-|x|^2 /\ell_R^2}.
$$

The kinetic equation associated to this particle model as discussed in
\cite{CDP} gives the evolution of $f = f(t,x,v)$ as
\begin{equation}
  \label{eq:swarming}
  \partial_t f + v \cdot \grad_x f
  - (\grad U * \rho) \cdot \grad_v f
  + \dv_v((\alpha - \beta \abs{v}^2) v f)
  = 0,
\end{equation}
where $\rho$ represents the macroscopic \emph{density} of $f$:
\begin{equation}
  \label{eq:def-rho}
  \rho(t,x) := \int_{\R^d} f(t,x,v) \,dv
  \quad \text{ for } t \geq 0, x \in \R^d.
\end{equation}

In the Cucker-Smale model, introduced in \cite{CS1,CS2}, the only
mechanism taken into account is the reorientation interaction between
agents. Each agent in the swarm tries to mimick other individuals by
adjusting/averaging their relative velocity with all the others.  This
averaging is weighted in such a way that closer individuals have more
influence than further ones. For a system with $N$ individuals the
Cucker-Smale model reads as
$$
\left\{\begin{array}{lr} \displaystyle\frac{dx_i}{dt} = v_i, \\[3mm]
\displaystyle\frac{dv_i}{dt} = \frac1N \displaystyle\sum_{j=1}^{N}
w_{ij} \left(v_j - v_i\right) ,
\end{array}\right.
$$
with the \emph{communication rate} $w(x)$ given by:
$$
w_{ij} = w(|x_i-x_j|)= \frac 1 {\left(1 +|x_i - x_j|^2
  \right)^\gamma}
$$
for some $\gamma \geq 0$. This particle model leads to the following
kinetic model \cite{HT08,HL08,cfrt09}:
\begin{equation}\label{eq:CS}
\frac{\partial f}{\partial t} + v\cdot \nabla_x f = \nabla_v \cdot
\left[\xi[f] \,f\right]
\end{equation}
where $\xi[f](x,v,t) = \left( H \ast f \right)(x,v,t)$, with
$H(x,v)=w(x)v$ and $\ast$ standing for the convolution in both
position and velocity ($x$ and $v$). We refer to \cite{CS1,CS2,cfrt09}
for further discussion about this model and qualitative properties.

Moreover, quite general models incorporating the three effects
previously discussed have been considered in \cite{LLE,LLE2}. In
particular, they consider that $N$ individuals follow the system:
\begin{equation}\label{eq:lle}
\left\lbrace
\begin{array}{l}
\displaystyle \frac{dx_i}{dt} = {v}_i, \vspace{.3cm}\\
\displaystyle \frac{dv_i}{dt} = F^A_i + F^I_i,
\end{array}
\right.
\end{equation}
where $F^A_i$ is the self-propulsion autonomously generated by the
$i$th-individual, while $F^I_i$ is due to interaction with the
others. The model in Section 3 corresponds to $F^A_i = (\alpha - \beta
\,|{v}_i|^2) {v}_i$, while the term $F^A_i= - \beta \, {v}_i$ is
considered in \cite{LR}, and $F^A_i= a_i - \beta \, {v}_i$ in
\cite{LLE,LLE2}. Here, $a_i$ is an autonomous self-propulsion force
generated by the $i$th-particle, and may depend on environmental
influences and the location of the particle in the school. The
interaction with other individuals can be generally modeled as:
$$
F^I_i=F^{I,x}_i + F^{I,v}_i = \sum_{j=1}^N
g_\pm(|x_i-x_j|)\frac{x_j-x_i}{|x_i-x_j|} + \sum_{j=1}^N
h_\pm(|v_i-v_j|)\frac{v_j-v_i}{|v_i-v_j|} .
$$
Here, $g_+$ and $h_+$ ($g_-$ and $h_-$) are chosen when the influence
comes from the front (behind), i.e., if $(x_j-x_i)\cdot v_i> 0$
($<0$); choosing $g_+ \neq g_-$ and $h_+ \neq h_-$ means that the
forces from particles in front and those from particles behind are
different. The sign of the functions $g_\pm(r)$ encodes the
short-range repulsion and long-range attraction for particles in front
of (+) and behind (-) the $i$th-particle. Similarly, $h_+>0$ ($<0$)
implies that the velocity-dependent force makes the velocity of
particle $i$ get closer to (away from) that of particle $j$.

In the next sections we will be concerned with the well-posedness for
measure solutions to \eqref{eq:swarming}, \eqref{eq:CS} and
generalized kinetic equations including the corresponding to the
$N$-individuals model in \eqref{eq:lle}.

\subsection{Preliminaries on mass transportation and notation}
\label{sec:transportation}

Let us recall some notation and known results about mass
transportation that we will use in the next sections. For a more
detailed approach, the interested reader can refer to
\cite{CT,Villani}.

We consider the space of probability measures $\Po(\R^d)$, consisting
of all probability measures on $\R^d$ with finite first moment. In
$\Po(\R^d)$ a natural concept of distance to work with is the
so-called \emph{Monge-Kantorovich-Rubinstein distance},
\begin{equation}
W_1(f,g) = \sup \left \{ \left |\int_{\R^d} \varphi(P)
(f(P)-g(P))\, d P \right |, \varphi \in \lip(\R^d),
\lip(\varphi)\leq 1 \right \}, \label{w1char}
\end{equation}
where $\lip(\R^d)$ denotes the set of Lipschitz functions on $\R^d$
and $\lip(\varphi)$ the Lipschitz constant of a function
$\varphi$. Denoting by $\Lambda$ the set of transference plans between
the measures $f$ and $g$, i.e., probability measures in the product
space $\R^d \times \R^d$ with first and second marginals $f$ and $g$
respectively, then we have
\begin{equation}\label{wpdef}
W_1(f, g) = \inf_{\pi\in\Lambda} \left\{ \int_{\R^d \times \R^d}
\vert P_1 - P_2 \vert \, d \pi(P_1, P_2) \right\}
\end{equation}
by Kantorovich duality. $\Po(\R^d)$ endowed with this distance is
a complete metric space. In the following proposition we recall
some of its properties. We refer to \cite{Villani} for a survey of
these basic facts.

\begin{prp}[$W_1$-properties]\label{w2properties}
The fol\-lowing properties of the distance $W_1$ hold:
\begin{enumerate}
\item[i)] {\bf Optimal transference plan:} The infimum in the
definition of the distance $W_1$ is achieved. Any joint
probability measure $\Pi_o$ satisfying:
$$
W_1(f, g) = \int_{\R^d \times \R^d} \vert P_1 - P_2 \vert \,
d\Pi_o(P_1, P_2).
$$
is called an optimal transference plan and it is generically non
unique for the $W_1$-distance.

\item[ii)] {\bf Convergence of measures:} Given $\{f_k\}_{k\ge 1}$
and $f$ in $\Po(\R^d)$, the following three assertions are
equivalent:
\begin{itemize}
\item[a)] $W_1(f_k, f)$ tends to $0$ as $n$ goes to infinity.

\item[b)] $f_k$ tends to $f$ weakly-* as measures as $k$ goes to
infinity and
$$
\sup_{k\ge 1} \int_{\vert v \vert > R} \vert v \vert \, f_k(v) \,
dv \to 0 \, \mbox{ as } \, R \to +\infty.
$$

\item[c)] $f_k$ tends to $f$ weakly-* as measures and
$$
 \int_{\R^d} \vert v \vert \, f_k(v) \, dv \to
\int_{\R^d} \vert v \vert \, f(v) \, dv  \, \mbox{ as } \,
\mbox{n} \to + \infty.
$$
\end{itemize}

% \item[iii)] {\bf Lower semicontinuity:} $W_1$ is weakly-* lower
% semicontinuous in each argument.

% \item[vi)] {\bf Convexity:} Given $f_1$, $f_2$, $g_1$ and $g_2$ in
% $\Po(\R^d)$ and $\alpha$ in $[0,1]$, then
% $$
% W_1(\alpha f_1 + (1-\alpha) f_2,\alpha g_1 + (1-\alpha) g_2) \leq
% \alpha W_1(f_1,g_1) + (1-\alpha) W_1(f_2,g_2).
% $$
% As a simple consequence, given $f,g$ and $h$ in $\Po(\R^d)$, then
% $$
% W_1(h * f,h * g) \leq W_1(f,g)
% $$
% where $*$ stands for the convolution in $\R^d$.

% \item[vii)] {\bf Additivity with respect to convolution:} Given
% $f_1$, $f_2$, $g_1$ and $g_2$ in ${\cal P}_2(\R^d)$ with with
% equal mean values, then
% $$
% W_1(f_1 * f_2, g_1 * g_2) \leq W_1(f_1,g_1) + W_1(f_2,g_2).
% $$
\end{enumerate}
\end{prp}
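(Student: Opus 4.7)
The plan is to handle part (i) by the direct method of the calculus of variations, and to establish the cycle (a)$\Rightarrow$(c)$\Rightarrow$(b)$\Rightarrow$(a) for part (ii) using the dual characterization \eqref{w1char} as the principal tool. Throughout I may assume $\varphi(0)=0$ in \eqref{w1char}, since adding a constant does not alter the pairing against $f_k-f$ and it forces the useful pointwise bound $|\varphi(P)|\le|P|$.

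For (i), I would first note that every element of $\Lambda$ has fixed marginals $f$ and $g$, so $\Lambda$ is tight (tightness of each marginal individually implies tightness of $\Lambda$ by a standard product compact set construction). Since the marginal constraints are linear in $\pi$, $\Lambda$ is also closed under weak-$*$ convergence. The cost $\pi\mapsto\int|P_1-P_2|\,d\pi$ is lower semi-continuous for weak-$*$ convergence, being the supremum of the weak-$*$-continuous functionals $\pi\mapsto\int(|P_1-P_2|\wedge n)\,d\pi$ for $n\in\mathbb{N}$. Applying Prokhorov's theorem to a minimizing sequence and taking a weak-$*$ limit then yields a minimizer $\Pi_o$.

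For part (ii), the implication (a)$\Rightarrow$(c) follows by testing \eqref{w1char} with the $1$-Lipschitz functions $\varphi(P)=\pm|P|$, which gives $|\int |P|(f_k-f)\,dP|\le W_1(f_k,f)\to 0$. Weak-$*$ convergence also follows from \eqref{w1char}, since any bounded continuous test function can be uniformly approximated on large balls by bounded Lipschitz functions. The step (c)$\Rightarrow$(b) is the standard fact that, under weak-$*$ convergence of probability measures, convergence of the first moment forces uniform integrability of $|P|$: split $\int_{|P|>R}|P|\,df_k=\int|P|\,df_k-\int(|P|\wedge R)\,df_k$ and use that $|P|\wedge R$ is bounded continuous together with (c) to deduce the claimed uniform tail decay.

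The main obstacle is the direction (b)$\Rightarrow$(a), where one must convert weak-$*$ convergence plus uniform tail control into convergence in the dual norm $W_1$. Given a $1$-Lipschitz $\varphi$ with $\varphi(0)=0$, I would split
\[
\int\varphi(P)(f_k-f)(dP)=\int(\varphi\,\chi_R)(P)(f_k-f)(dP)+\int\varphi(P)(1-\chi_R(P))(f_k-f)(dP),
\]
where $\chi_R$ is a Lipschitz cut-off equal to $1$ on $\{|P|\le R\}$ and supported in $\{|P|\le 2R\}$. The tail term is bounded by $\int_{|P|\ge R}|P|\,df_k+\int_{|P|\ge R}|P|\,df$, which is smaller than $\varepsilon$ uniformly in $k$ for $R$ large enough by (b). The bulk term involves a compactly supported bounded continuous function and tends to zero as $k\to\infty$ by weak-$*$ convergence. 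Since these bounds depend on $R$ and $\varepsilon$ but not on $\varphi$, taking the supremum over $\varphi$ in \eqref{w1char} yields $\limsup_k W_1(f_k,f)\le C\varepsilon$, and letting $\varepsilon\to 0$ concludes. The delicate point is precisely that the tail estimate must be uniform in $\varphi$, and the normalization $\varphi(0)=0$ combined with $\lip(\varphi)\le 1$ is what provides the uniform control $|\varphi|\le|\cdot|$ that makes this argument work.
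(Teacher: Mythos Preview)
The paper does not prove this proposition; it simply records it as known background and cites Villani's monograph. There is therefore no argument in the paper to compare yours against, and anything you write is already more than the authors provide.

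Your treatment of part (i) and of the implications (a)$\Rightarrow$(c) and (c)$\Rightarrow$(b) follows the standard route and is essentially fine, modulo a small slip: the identity $\int_{|P|>R}|P|\,df_k=\int|P|\,df_k-\int(|P|\wedge R)\,df_k$ is not correct as written, since the right-hand side equals $\int(|P|-R)_+\,df_k$. This is easily repaired (for instance via $\int_{|P|>2R}|P|\,df_k\le 2\int(|P|-R)_+\,df_k$), and you should add a word on why pointwise-in-$R$ convergence of $\int(|P|-R)_+\,df_k$ to $\int(|P|-R)_+\,df$ upgrades to the uniform-in-$k$ tail bound required in (b) (treat finitely many small $k$ separately).

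The genuine gap is in (b)$\Rightarrow$(a). You correctly note that the tail bound is uniform in $\varphi$ thanks to $|\varphi|\le|\cdot|$, but you then assert that ``these bounds depend on $R$ and $\varepsilon$ but not on $\varphi$'' and pass to the supremum. The bulk term $\int(\varphi\chi_R)(f_k-f)$ \emph{does} depend on $\varphi$: weak-$*$ convergence gives $\int\psi\,(f_k-f)\to 0$ only for each fixed $\psi$, and the rate can vary with $\psi=\varphi\chi_R$. To take the supremum over $\varphi$ you need a compactness step: the family $\{\varphi\chi_R:\varphi(0)=0,\ \Lip(\varphi)\le 1\}$ is uniformly bounded and equi-Lipschitz on $B_{2R}$, so Arzel\`a--Ascoli lets you extract uniformly convergent subsequences and derive a contradiction from any putative failure of uniform convergence. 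Without this the argument as written does not close; your own closing remark about the ``delicate point'' identifies the uniformity issue for the tail but overlooks it for the bulk.
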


Throughout the paper we will denote the integral of a function
$\varphi = \varphi(x)$ with respect to a measure $\mu$ by $\int
\varphi(x) \mu(x) \,dx$, even if the measure is not absolutely
continuous with respect to Lebesgue measure, and hence does not
have an associated density.

Given a probability measure $f \in \Po(\R^d \times \R^d)$ we
always denote by $\rho$ its first marginal, written as follows by
an abuse of notation:
\begin{equation}
  \label{eq:def-rho-abuse}
  \rho(x) := \int_{\R^d} f(x,v) \,dv.
\end{equation}
To be more precise, $\rho$ is given by its action on a
$\mathcal{C}^0_c$ function $\phi: \R^d \to \R$,
\begin{equation*}
  \int_{\R^d} \rho(x) \phi(x) \,dx
  = \int_{\R^d \times \R^d} f(x,v) \phi(x) \,dx\,dv.
\end{equation*}
For $T > 0$ and a function $f:[0,T] \to \Po(\R^d \times \R^d)$, it
is understood that $\rho$ is the function $\rho: [0,T] \to
\Po(\R^d)$ obtained by taking the first marginal at each time $t$.
Whenever we need to indicate explicitly the dependence of $\rho$
on $f$, we write $\rho[f]$ instead of just $\rho$.

We denote by $B_R$ the closed ball with center $0$ and radius $R > 0$
in the Euclidean space $\R^n$ of some dimension $n$. When we need to
explicitly indicate the dimension of the space, we will write
$B_R^n$. For a function $H : \R^n \to \R^m$, we will write $Lip_R(H)$
to denote the Lipschitz constant of $H$ in the ball $B_R \subseteq
\R^n$. For $T > 0$ and a function $H : [0,T] \times \R^n \to \R^m$, $H
= H(t,x)$, we again write $Lip_R(H)$ to denote the Lipschitz constant
\emph{with respect to $x$} of $H$ in the ball $B_R \subseteq \R^n$;
this is, $Lip_R(H)$ is the smallest constant such that
\begin{equation*}
  \abs{H(t,x_1) - H(t,x_2)} \leq Lip_R(H) \abs{x_1 - x_2}
  \quad
  \text{ for all } x_1,x_2 \in B_R, t \in [0,T].
\end{equation*}
For any such function $H$, we will denote the function depending on
$x$ at a fixed time $t$ by $H_t$; this is, $H_t(x) := H(t,x)$.

\section{Well-posedness for a system with interaction and
  self-propulsion}
\label{sec:swarming_model}

In this section we consider eq. \eqref{eq:swarming}. In this model
(and in fact, in every model considered in this paper) the total mass
is preserved, and by rescaling the equation and adapting the
parameters suitably one easily sees that we can normalize the equation
and consider only solutions with total mass $1$. We will do so and
reduce ourselves to work with probability measures.

\subsection{Notion of solution}

In order to motivate our definition of solution to equation
\eqref{eq:swarming} let us consider for a moment a general field $E$
instead of $-\nabla U * \rho$. Precisely, fix $T > 0$ and a function
$E:[0,T] \times \R^d \to \R^d$ such that:
\begin{hyp}[Conditions on $E$]
  \label{hyp:E-conditions}
  \begin{enumerate}
  \item $E$ is continuous on $[0,T] \times \R^d$,
  \item For some $C_E > 0$,
    \begin{equation}
      \label{eq:E-growth}
      \abs{E(t,x)} \leq C_E(1 + \abs{x}),
      \quad \text{ for all } t,x \in [0,T] \times \R^d,\text{ and}
    \end{equation}
  \item $E$ is \emph{locally Lipschitz with respect to $x$}, i.e., for
    any compact set $K \subseteq \R^d$ there is some $L_K > 0$ such
    that
    \begin{equation}
      \label{eq:Lipschitz-resp-x}
      \abs{E(t,x) - E(t,y)} \leq L_K \abs{x-y},
      \qquad t \in [0,T], \quad x,y \in K.
    \end{equation}
  \end{enumerate}
\end{hyp}
\noindent
We consider the equation
\begin{equation}
  \label{eq:swarming-E}
  \partial_t f + v \cdot \grad_x f
  + E \cdot \grad_v f
  + \dv_v((\alpha - \beta \abs{v}^2) v f)
  = 0,
\end{equation}
which is a linear first-order equation. The associated
characteristic system of ode's is
\begin{subequations}
  \label{eq:characteristics}
  \begin{align}
    \label{eq:characteristicsX}
    \frac{d}{dt} X &= V,
    \\
    \label{eq:characteristicsV}
    \frac{d}{dt} V &= E(t,X) + V(\alpha - \beta \abs{V}^2).
  \end{align}
\end{subequations}
\begin{lem}[Flow Map]\label{lem:exist_cont_charact}
  Take a field $E:[0,T] \times \R^d \to \R^d$ satisfying Hypothesis
  \ref{hyp:E-conditions}. Given $(X_0,V_0)\in \R^d \times \R^d$ there
  exists a unique solution $(X,V)$ to equations
  \eqref{eq:characteristicsX}-\eqref{eq:characteristicsV} in
  $\mathcal{C}^1([0,T];\R^d \times \R^d)$ satisfying $X(0)=X_0$ and
  $V(0)=V_0$. In addition, there exists a constant $C$ which depends
  only on $T$, $\abs{X_0}$, $\abs{V_0}$, $\a$, $\b$ and the constant
  $C_E$ in eq. \eqref{eq:E-growth}, such that
  \begin{equation}
    \label{eq:chars-bounded}
    \abs{ (X(t), V(t)) }
    \leq \abs{(X_0,V_0)} e^{Ct}
    \quad
    \text{ for all } t\in[0,T].
  \end{equation}
\end{lem}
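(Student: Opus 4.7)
My plan is to recast \eqref{eq:characteristicsX}-\eqref{eq:characteristicsV} as a first-order system $\dot Y = F(t,Y)$ on $\R^{2d}$ with $Y=(X,V)$ and $F(t,X,V) = (V,\; E(t,X) + (\alpha-\beta|V|^2)V)$, and then combine classical Cauchy--Lipschitz theory with a Grönwall-type energy estimate. Continuity of $F$ in $t$ is immediate from the continuity of $E$; $F$ is locally Lipschitz in $Y$ uniformly for $t \in [0,T]$ because the $Y$-dependence splits into a linear piece $V$, a piece $E(t,X)$ that is locally Lipschitz in $X$ by Hypothesis \ref{hyp:E-conditions}(iii), and a polynomial piece $(\alpha-\beta|V|^2)V$. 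The Picard--Lindelöf theorem then produces a unique $\mathcal{C}^1$ solution on some maximal interval $[0,T^*) \subseteq [0,T]$.

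To upgrade this to a solution on all of $[0,T]$ and to derive \eqref{eq:chars-bounded}, I would use the functional $\Phi(t) := |X(t)|^2 + |V(t)|^2$. Differentiating along the flow gives
\begin{equation*}
\tfrac{1}{2}\Phi'(t) = X \cdot V + V \cdot E(t,X) + \alpha |V|^2 - \beta |V|^4.
\end{equation*}
The friction term $-\beta|V|^4$ is nonpositive and may simply be dropped for an upper bound. Using $2|X \cdot V| \leq \Phi$, the growth bound \eqref{eq:E-growth}, and Young's inequality to control $|V|\cdot C_E(1+|X|)$, one obtains a linear differential inequality
\begin{equation*}
\Phi'(t) \leq K_1 + K_2\,\Phi(t),
\end{equation*}
with constants $K_1, K_2$ depending only on $\a$, $\b$ and $C_E$. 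Grönwall's inequality then yields $\Phi(t) \leq (\Phi(0) + K_1/K_2)\,e^{K_2 t}$ on $[0,T^*)$, which excludes finite-time blow-up of $|Y(t)|$ and therefore forces $T^* = T$ by the standard continuation criterion.

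The only slightly nonroutine step is extracting from this inhomogeneous bound the \emph{homogeneous} exponential bound $|(X(t),V(t))| \leq |(X_0,V_0)|e^{Ct}$ appearing in \eqref{eq:chars-bounded}. This is precisely where the statement permits $C$ to depend on $|X_0|$ and $|V_0|$: for a nonzero initial datum one enlarges the exponential rate so that the additive constant $K_1/K_2$ is absorbed into the factor $|(X_0,V_0)|^2\,e^{2CT}$, while the trivial datum case $(X_0,V_0)=0$ can be handled by a separate direct estimate or by continuity. This repackaging of constants is the main technical point in the proof; everything else is a routine ODE argument based on the regularity furnished by Hypothesis \ref{hyp:E-conditions}.
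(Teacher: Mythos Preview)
Your approach is essentially the same as the paper's: invoke Picard--Lindel\"of for local existence and uniqueness, observe that the cubic friction term $-\beta|V|^2V$ is dissipative so it can only help, and then derive the a priori bound by a Gr\"onwall-type estimate exploiting the linear growth of $E$. The paper's proof is a three-line sketch of exactly this; your energy computation with $\Phi = |X|^2 + |V|^2$ is simply a more explicit way of carrying out what the paper calls ``direct estimates on the equation, using the linear growth of the field $E$.''

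One remark on the point you flagged. Your observation that Gr\"onwall naturally produces an \emph{inhomogeneous} bound $\Phi(t) \le (\Phi(0)+K_1/K_2)e^{K_2 t}$, and that passing to the homogeneous form \eqref{eq:chars-bounded} requires allowing $C$ to depend on $|X_0|,|V_0|$, is exactly right for nonzero initial data. However, your proposed handling of $(X_0,V_0)=(0,0)$ ``by a separate direct estimate or by continuity'' does not actually go through: if $E(t,0)\neq 0$ (which Hypothesis~\ref{hyp:E-conditions} certainly allows, since it only gives $|E(t,0)|\le C_E$), then the solution starting at the origin does not stay there, and the literal inequality $|(X(t),V(t))|\le 0\cdot e^{Ct}$ fails. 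This is a minor defect in the lemma's \emph{statement} rather than in your argument, and it is invisible in the paper's terse proof as well; in applications the lemma is only used to assert that compactly supported data stay in a controlled ball, for which the inhomogeneous Gr\"onwall bound already suffices.
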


\begin{proof}
  As the field $E$ satisfies the regularity and growth conditions in
  Hypothesis \ref{hyp:E-conditions}, standard results in ordinary
  differential equations show that for each initial condition
  $(X(0),V(0)) \in \R^d \times \R^d$ this system has a unique solution
  defined on $[0,T)$ (the only term in the equations which does not
  grow linearly is $-\beta V \abs{V}^2$, and it makes $\abs{V}$
  decrease, so the solution is globally defined in time). The bound
  \eqref{eq:chars-bounded} on the solutions follows from direct
  estimates on the equation, using the linear growth of the field $E$.
\end{proof}

Calling $P \equiv (X,V)$, the system
(\ref{eq:characteristicsX})-(\ref{eq:characteristicsV}) can be
conveniently written as
\begin{equation}
  \label{eq:characteristics-short}
  \frac{d}{dt} P = \Psi_E(t, P),
\end{equation}
where $\Psi_E : [0,T] \times \R^d \times \R^d \to \R^d \times \R^d$ is
the right hand side of eqs. \eqref{eq:characteristicsX},
\eqref{eq:characteristicsV}. When the field $E$ is understood we will
just write $\Psi$ instead of $\Psi_E$. Using this notation, equation
(\ref{eq:swarming-E}) can also be rewriten as
\begin{equation}
    \label{eq:swarming-E-short}
    \frac{\partial f}{\partial t} + \dive(\Psi_Ef) = 0.
\end{equation}

We can thus consider the flow at time $t \in [0,T)$ of eqs. \eqref{eq:characteristics},
\begin{equation*}
  \calT_E^t:\R^d \times \R^d \to \R^d \times \R^d.
\end{equation*}
Again by basic results in ode's, the map $(t,x,v) \mapsto
\calT_E^t(x,v)=(X,V)$ with $(X,V)$ the solution at time $t$ to
\eqref{eq:chars-bounded} with initial data $(x,v)$, is jointly
continuous in $(t,x,v)$. For a measure $f_0 \in \Po(\R^d \times
\R^d)$ it is well-known that the function
\begin{equation*}
  f: [0,T) \to \Po(\R^d\times\R^d),
  \quad
  t \mapsto f_t := \calT_E^t \# f_0
\end{equation*}
is a measure solution to eq. \eqref{eq:swarming-E}, i.e., a
solution in the distributional sense. Here we are using the mass
transportation notation of \emph{push-forward}: $f_t = \calT_E^t \#
f_0$ is defined by
\begin{equation}\label{eq:pushforward}
\int_{\R^{2d}} \zeta(x,v) \,f(t,x,v)\,d(x,v) = \int_{\R^{2d}}
\zeta(\calT_E^t(x,v)) \,f_0(x,v) \,d(x,v),
\end{equation}
for all $\zeta \in \mathcal C_b^0(\R^{2d})$. Note that in the case
where the initial condition $f_0$ is regular (say,
$\mathcal{C}^\infty_c$) this is just a way to rewrite the solution
of the equation through the method of characteristics. This
motivates the following definition:
\begin{dfn}[Notion of Solution]
  \label{dfn:solution}
  Take a potential $U \in \mathcal{C}^{1}(\R^d)$ such that
  \begin{equation}
    \label{eq:U-growth}
    \abs{ \nabla U (x) } \leq C (1+\abs{x}),
    \qquad x \in \R^d,
  \end{equation}
  for some constant $C > 0$. Take also a measure $f_0 \in \P_1(\R^d
  \times \R^d)$, and $T \in (0,\infty]$. We say that a function
  $f:[0,T] \to \P_1(\R^d \times \R^d)$ is a solution of the swarming
  equation \eqref{eq:swarming} with initial condition $f_0$ when:
  \begin{enumerate}
  \item The field $E[f] = -\nabla U * \rho$ satisfies the conditions in
    Hypothesis \ref{hyp:E-conditions}.
  \item It holds $f_t = \calT_{E[f]}^t \# f_0$.
  \end{enumerate}
\end{dfn}

\begin{rem}\label{rem:unboundE}
  This definition gives a convenient condition on $U$ so that a
  measure solution in $\P_1(\R^d \times \R^d)$ makes sense. One can
  weaken the requirement on $U$ in this definition as long as the
  requirements on $f$ are suitably strengthened (e.g., one can allow a
  faster growth of the potential if one imposes a faster decay of $f$,
  or less local regularity of $U$ if one assumes more regularity of
  $f$), but we will not consider these modifications in the present
  paper.

  Since we ask the gradient of the potential to be locally Lipschitz,
  we cannot consider potentials with a singularity at the origin. This
  is a strong limitation of the classical theory, and is considered a
  difficult problem for the mean-field limit. As for the existence
  theory, if one wants to consider more singular potentials, one can
  work with functions $f$ which are more regular than just measures,
  so that $\nabla U * \rho$ becomes locally Lipschitz and a parallel
  existence theory can be developed.
\end{rem}

\subsection{Estimates on the characteristics}

We gather in this section some estimates on solutions to the
characteristic equations \eqref{eq:characteristics}. In this section
we fix $T > 0$ and fields $E,E^1,E^2 : [0,T] \times \R^d \to \R^d$
which are assumed to satisfy Hypothesis \ref{hyp:E-conditions}, and we
consider their corresponding characteristic equations
\eqref{eq:characteristics}. Recall that $\Psi_E$ is a shorthand for
the right hand side of \eqref{eq:characteristics}, as in
\eqref{eq:characteristics-short}.

We first gather some basic regularity results for the function
which defines the right hand side of eqs.
\eqref{eq:characteristicsX}--\eqref{eq:characteristicsV}:

\begin{lem}[Regularity of the characteristic equations]
  \label{lem:reg-chareqs}
  Take a field $E: [0,T] \times \R^d \to \R^d$ which satisfies
  Hypothesis \ref{hyp:E-conditions}. Consider a number $R > 0$ and the
  closed ball $B_R \subseteq \R^d \times \R^d$.
  \begin{enumerate}
  \item $\Psi_E$ is bounded in compact sets: For $P = (X, V) \in B_R$
    and $t \in [0,T]$,
    \begin{equation*}
      \abs{\Psi_E(t,P)}
      \leq
      C
    \end{equation*}
    for some $C > 0$ which depends only on $\alpha$, $\beta$, $R$, and
    $\norm{E}_{L^\infty([0,T]\times B_R)}$.
  \item $\Psi_E$ is locally Lipschitz with respect to $x,v$: For all
    $P_1 = (X_1, V_1)$, $P_2 = (X_2, V_2)$ in $B_R$, and $t \in
    [0,T]$,
  \begin{equation*}
    \abs{\Psi_E(t,P_1) - \Psi_E(t,P_2)}
    \leq
    C (1+Lip_R(E_t)) \abs{P_1 - P_2},
  \end{equation*}
  for some number $C > 0$ which depends only on $\alpha$ and $\beta$.
  \end{enumerate}
\end{lem}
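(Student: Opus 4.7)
The plan is to unwind the definition of $\Psi_E$ and read off both bounds componentwise; the only nonroutine ingredient is the local Lipschitz constant of the cubic self-propulsion term. From the right-hand side of \eqref{eq:characteristicsX}--\eqref{eq:characteristicsV} one has explicitly
$$\Psi_E(t,X,V) = \bigl(V,\ E(t,X) + (\alpha - \beta|V|^2)V\bigr),$$
so both parts reduce to elementary estimates on each of these two components.

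For part (1), I would simply apply the triangle inequality componentwise. Since $(X,V)\in B_R$ forces $|X|,|V|\le R$, the first component is bounded by $R$, while the second is bounded by $\norm{E}_{L^\infty([0,T]\times B_R)} + (\alpha + \beta R^2)R$. Summing gives a bound depending only on $\alpha$, $\beta$, $R$ and $\norm{E}_{L^\infty([0,T]\times B_R)}$, as claimed.

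For part (2), the first component contributes $|V_1 - V_2| \le |P_1 - P_2|$ trivially. The $E$-piece of the second component yields $|E(t,X_1) - E(t,X_2)| \le Lip_R(E_t)\,|X_1 - X_2| \le Lip_R(E_t)\,|P_1 - P_2|$ directly from Hypothesis \ref{hyp:E-conditions}. The remaining term $\alpha(V_1 - V_2) - \beta(|V_1|^2 V_1 - |V_2|^2 V_2)$ I would handle by the algebraic identity
$$|V_1|^2 V_1 - |V_2|^2 V_2 = |V_1|^2 (V_1 - V_2) + (|V_1|^2 - |V_2|^2)\,V_2,$$
combined with $\big||V_1|^2 - |V_2|^2\big| \le (|V_1| + |V_2|)|V_1 - V_2|$ and $|V_1|,|V_2|\le R$, which gives a Lipschitz constant of order $\alpha + 3\beta R^2$ for this contribution on $B_R^d$. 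Summing the three pieces yields $C(1 + Lip_R(E_t))|P_1 - P_2|$, where $C$ depends on $\alpha$, $\beta$ (and, implicitly, on the fixed $R$).

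The argument is essentially routine; no single step is a real obstacle. The only point worth flagging is that the cubic term $V\mapsto |V|^2 V$ is merely locally Lipschitz in $V$, which is precisely why the lemma and the bounds in \eqref{eq:chars-bounded} are formulated on the ball $B_R$ rather than globally, and why the $R$-dependent factor $3\beta R^2$ enters the Lipschitz constant.
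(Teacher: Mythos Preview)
Your proof is correct and follows exactly the approach the paper indicates (``direct calculation from eqs.\ \eqref{eq:characteristicsX}--\eqref{eq:characteristicsV}''), simply spelling out the componentwise estimates that the paper omits. Your observation that the constant $C$ in part (2) necessarily carries an $R$-dependence through the factor $3\beta R^2$ is accurate and worth noting: the lemma's statement says $C$ depends only on $\alpha$ and $\beta$, but the paper itself, when invoking this lemma in the proof of Lemma~\ref{lem:dep-chars-E}, acknowledges that the constant ``depends on $\alpha$, $\beta$, $R$ and the Lipschitz constant of $E^1$''.
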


\begin{proof}
  This can be obtained by a direct calculation from
  eqs. \eqref{eq:characteristicsX}--\eqref{eq:characteristicsV}.
\end{proof}

\begin{lem}[Dependence of the characteristic equations on $E$]
  \label{lem:dep-chareqs-E}
  Take two fields $E^1, E^2 : [0,T] \times \R^d \to \R^d$ satisfying
  Hypothesis \ref{hyp:E-conditions}, and consider the functions
  $\Psi_{E^1}$, $\Psi_{E^2}$ which define the characteristic equations
  \eqref{eq:characteristics} as in
  eq. \eqref{eq:characteristics-short}. Then, for any compact (in
  fact, any measurable) set $B$,
  \begin{equation*}
    \norm{\Psi_{E^1} - \Psi_{E^2}}_{L^\infty(B)}
    \leq
    \norm{E^1 - E^2}_{L^\infty(B)}.
  \end{equation*}
\end{lem}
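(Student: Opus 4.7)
The plan is to proceed by direct computation from the explicit form of $\Psi_E$, since the $E$-dependence enters $\Psi_E$ only through a single additive term and the remaining pieces of $\Psi_E$ do not involve $E$ at all.

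First I would write out $\Psi_E$ explicitly from \eqref{eq:characteristicsX}--\eqref{eq:characteristicsV}: for $P = (X,V) \in \R^d \times \R^d$ and $t \in [0,T]$,
\begin{equation*}
  \Psi_E(t, X, V) = \bigl( V,\; E(t,X) + (\alpha - \beta \abs{V}^2) V \bigr).
\end{equation*}
Subtracting the analogous expression for $E^2$ from that for $E^1$, the first component vanishes, and in the second component the term $(\alpha - \beta \abs{V}^2) V$ is common to both and cancels, leaving
\begin{equation*}
  \Psi_{E^1}(t, X, V) - \Psi_{E^2}(t, X, V) = \bigl( 0,\; E^1(t,X) - E^2(t,X) \bigr).
\end{equation*}
Taking Euclidean norms therefore gives the pointwise identity $\abs{\Psi_{E^1}(t,P) - \Psi_{E^2}(t,P)} = \abs{E^1(t,X) - E^2(t,X)}$.

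Finally I would take the essential supremum over $(t,P) \in B$. Since the right-hand side depends only on the $(t,X)$-components, it is bounded by the essential supremum of $\abs{E^1 - E^2}$ over the projection of $B$ onto $[0,T] \times \R^d$, which is in turn bounded by $\norm{E^1 - E^2}_{L^\infty(B)}$ under the natural abuse of notation that identifies $E^1 - E^2$ with its pullback under the projection $(t,X,V) \mapsto (t,X)$. This yields the claimed inequality, and in fact equality up to projection.

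There is really no obstacle here beyond clarifying the domain convention for the $L^\infty$-norm of $E^1 - E^2$ on a set $B$ which a priori lives in the larger space $[0,T] \times \R^d \times \R^d$; the estimate itself is immediate from the structural fact that $E$ appears in $\Psi_E$ only linearly and only in the $V$-component, with no coupling to $V$ or to $(\alpha,\beta)$.
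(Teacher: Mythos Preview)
Your proof is correct and is exactly the computation the paper has in mind; the paper's own proof is simply the one line ``Trivial from the expression of $\Psi_{E^1}$, $\Psi_{E^2}$.'' Your remark about the domain convention for $\norm{E^1 - E^2}_{L^\infty(B)}$ is a fair clarification of a minor abuse of notation, but there is no mathematical gap.
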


\begin{proof}
  Trivial from the expression of $\Psi_{E^1}$, $\Psi_{E^2}$.
\end{proof}

Now we explicitly state some results which give a quantitative
bound on the regularity of the flow $\calT_E^t$, and its dependence on
the field $E$.

\begin{lem}[Dependence of characteristics on $E$]
  \label{lem:dep-chars-E}
  Take two fields $E^1, E^2 : [0,T] \times \R^d \to \R^d$ satisfying
  Hypothesis \ref{hyp:E-conditions}, and a point $P^0 \in \R^d \times
  \R^d$. Take $R > 0$, and assume that
  \begin{equation*}
    \abs{\calT_{E^1}^t(P^0)} \leq R,
    \quad
    \abs{\calT_{E^2}^t(P^0)} \leq R
    \qquad \mbox{for }t \in [0,T].
  \end{equation*}
Then for $t \in [0,T]$ it holds that
%   \begin{equation}
%     \abs{\calT_{E^1}^t(P^0) - \calT_{E^2}^t(P^0)}
%     \leq
%     \int_0^t e^{C(t-s)} \norm{E^1_s - E^2_s}_{L^\infty(B_R)} \,ds
%   \end{equation}
  for some constant $C$ which depends only on $\alpha$, $\beta$, $R$
  and $\Lip_R(E^1)$%. As a consequence,
  \begin{equation}
    \label{eq:dep-chars-E}
    \abs{\calT_{E^1}^t(P^0) - \calT_{E^2}^t(P^0)}
    \leq
    \frac{e^{C t} - 1}{C}
    \sup_{s \in [0,T)} \norm{E^1_s - E^2_s}_{L^\infty(B_R)}.
  \end{equation}
\end{lem}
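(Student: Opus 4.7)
The plan is a straightforward Gronwall argument, using the two preceding lemmas to split the difference of the vector fields into a term controlling $\abs{P^1-P^2}$ and a term controlling $\norm{E^1-E^2}_{L^\infty}$.

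First I would write $P^i(t) := \calT_{E^i}^t(P^0)$ for $i=1,2$, so that by definition of the flow,
\begin{equation*}
\frac{d}{dt}(P^1(t) - P^2(t)) = \Psi_{E^1}(t,P^1(t)) - \Psi_{E^2}(t,P^2(t)),
\qquad P^1(0)-P^2(0) = 0.
\end{equation*}
The standard trick is to insert and subtract $\Psi_{E^1}(t,P^2(t))$, splitting the right-hand side as
\begin{equation*}
\bigl[\Psi_{E^1}(t,P^1(t))-\Psi_{E^1}(t,P^2(t))\bigr]
+ \bigl[\Psi_{E^1}(t,P^2(t))-\Psi_{E^2}(t,P^2(t))\bigr].
\end{equation*}

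Since by assumption both $P^1(t)$ and $P^2(t)$ lie in $B_R$ for all $t\in[0,T]$, Lemma \ref{lem:reg-chareqs} applied to the first bracket gives a bound of $C(1+\Lip_R(E^1_t))\,\abs{P^1(t)-P^2(t)}$, where $C$ depends only on $\a,\b,R$. Lemma \ref{lem:dep-chareqs-E} applied to the second bracket, with the measurable set taken as $B_R$, gives a bound of $\norm{E^1_t-E^2_t}_{L^\infty(B_R)}$, which is in turn dominated by $M := \sup_{s\in[0,T)}\norm{E^1_s-E^2_s}_{L^\infty(B_R)}$. Writing $D(t):=\abs{P^1(t)-P^2(t)}$, and absorbing the time-independent bound $\Lip_R(E^1)$ (which exists by Hypothesis \ref{hyp:E-conditions}) into a single constant still called $C$ depending only on $\a,\b,R,\Lip_R(E^1)$, I obtain the differential inequality
\begin{equation*}
\frac{d}{dt} D(t) \leq C\, D(t) + M, \qquad D(0)=0.
\end{equation*}

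Finally I would invoke Gronwall's inequality (or just solve the linear ODE $y'=Cy+M$ with $y(0)=0$) to get
\begin{equation*}
D(t) \leq M\int_0^t e^{C(t-s)}\,ds = \frac{e^{Ct}-1}{C}\,M,
\end{equation*}
which is exactly \eqref{eq:dep-chars-E}. There is no real obstacle; the only mild care needed is in combining $C(1+\Lip_R(E^1))$ into a single constant $C$ with the dependence listed in the statement, and noting that the ball $B_R$ used in Lemma \ref{lem:dep-chareqs-E} is precisely the one in which both trajectories are assumed to remain, so that the bound on $\Psi_{E^1}-\Psi_{E^2}$ is applied only along $P^2(t)$.
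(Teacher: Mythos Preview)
Your proposal is correct and follows essentially the same approach as the paper: both insert and subtract $\Psi_{E^1}(t,P^2(t))$, apply Lemmas \ref{lem:reg-chareqs} and \ref{lem:dep-chareqs-E} to the two resulting terms, and close with Gronwall. The only cosmetic difference is that you phrase it as a differential inequality for $D(t)=\abs{P^1(t)-P^2(t)}$ while the paper works directly with the integral form; the latter is slightly cleaner since $\abs{\cdot}$ need not be differentiable, but the argument and the resulting bound are identical.
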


\begin{proof}
  For ease of notation, write $P_i(t) \equiv \calT_{E_i}^t(P^0) \equiv
  (X_i(t), V_i(t))$, for $i = 1,2$, $t \in [0,T]$. These functions
  satisfy the characteristic equations \eqref{eq:characteristics}:
  \begin{gather*}
    \frac{d}{dt} P_i = \Psi_{E_i}(t,P_i), \quad P_i(0) = P^0,
    \quad \text{ for } i = 1,2 .
  \end{gather*}
  Then, for $t \in [0,T]$, and using Lemmas \ref{lem:reg-chareqs} and
  \ref{lem:dep-chareqs-E},
  \begin{align*}
    \abs{P_1(t) - P_2(t)}
    \leq &\,
    \int_0^t \abs{\Psi_{E^1}(s,P_1(s)) - \Psi_{E^2}(s,P_2(s))} \,ds
    \\
    \leq &\,
    \int_0^t \abs{\Psi_{E^1}(s,P_1(s)) - \Psi_{E^1}(s,P_2(s))} \,ds
    \\
    &+
    \int_0^t \abs{\Psi_{E^1}(s,P_2(s)) - \Psi_{E^2}(s,P_2(s))} \,ds
    \\
    \leq &\,
    C \int_0^t \abs{P_1(s) - P_2(s)} \,ds
    +
    \int_0^t \norm{E^1_s - E^2_s}_{L^\infty(B_R)} \,ds
  \end{align*}
  where $C$ is the constant in point 2 of Lemma \ref{lem:reg-chareqs},
  which depends on $\alpha$, $\beta$, $R$ and the Lipschitz constant
  of $E^1$ with respect to $x$ in the ball $B_R$. By Gronwall's Lemma,
  \begin{align*}
    \abs{P_1(t) - P_2(t)}
    \leq &\,
    \int_0^t e^{C(t-s)} \norm{E^1_s - E^2_s}_{L^\infty(B_R)} \,ds.
  %\end{equation*}
  %This proves the first part of our result. To prove the second part,
  %continue from above to write
  %\begin{align*}
   % \abs{P_1(t) - P_2(t)}
   % \leq &\,
   % \left(
   %   \int_0^t e^{C(t-s)} \,ds
   % \right)
   % \sup_{s \in (0,T)} \norm{E^1_s - E^2_s}_{L^\infty(B_R)}
   % \\
   % =&\,
    \\
    \leq &\,
    \frac{e^{C t} - 1}{C}
    \sup_{s \in (0,T)} \norm{E^1_s - E^2_s}_{L^\infty(B_R)},
  \end{align*}
  which finishes the proof.
\end{proof}

\begin{lem}[Regularity of characteristics with respect to initial
  conditions]
  \label{lem:reg-chars}
  Take $T > 0$ and a field $E: [0,T] \times \R^d \to \R^d$ satisfying
  Hypothesis \ref{hyp:E-conditions}. Take also $P_1, P_2 \in \R^d\times\R^d$
  and $R > 0$, and assume that
  \begin{equation*}
    \abs{\calT_{E}^t(P_1)} \leq R,
    \quad
    \abs{\calT_{E}^t(P_2)} \leq R
    \qquad t \in [0,T].
  \end{equation*}
  Then it holds that
  \begin{equation}
    \abs{\calT_{E}^t(P_1) - \calT_{E}^t(P_2)}
    \leq
    \abs{P_1 - P_2} e^{C \int_0^t (\Lip_R(E_s)+1) \,ds},
    \quad t \in [0,T],
  \end{equation}
  for some constant $C$ which depends only on $R$, $\alpha$ and
  $\beta$. Said otherwise, $\calT_E^t$ is Lipschitz on $B_R \subseteq \R^d
  \times \R^d$, with constant
  \begin{equation*}
    \Lip_R(\calT_E^t) \leq e^{C \int_0^t (\Lip_R(E_s)+1) \,ds},
    \quad t \in [0,T].
  \end{equation*}
\end{lem}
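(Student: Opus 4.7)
The proof is essentially parallel to that of Lemma \ref{lem:dep-chars-E}, but now the two trajectories share the same field and differ in their initial data. The plan is to write the trajectories as solutions of an ODE driven by $\Psi_E$, subtract, bound the integrand using the local Lipschitz estimate for $\Psi_E$ proved in Lemma \ref{lem:reg-chareqs}, and close with Gronwall's inequality.

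Concretely, I set $Q_i(t) := \calT_E^t(P_i)$ for $i=1,2$, so that each $Q_i \in \mathcal{C}^1([0,T];\R^d\times\R^d)$ satisfies
\begin{equation*}
  \frac{d}{dt} Q_i(t) = \Psi_E(t, Q_i(t)), \qquad Q_i(0) = P_i.
\end{equation*}
Integrating and subtracting yields
\begin{equation*}
  \abs{Q_1(t) - Q_2(t)}
  \leq \abs{P_1 - P_2}
  + \int_0^t \abs{\Psi_E(s, Q_1(s)) - \Psi_E(s, Q_2(s))}\,ds.
\end{equation*}
By hypothesis $Q_1(s), Q_2(s) \in B_R$ for all $s \in [0,T]$, so the local Lipschitz estimate in point 2 of Lemma \ref{lem:reg-chareqs} applies pointwise in $s$ and gives
\begin{equation*}
  \abs{\Psi_E(s, Q_1(s)) - \Psi_E(s, Q_2(s))}
  \leq C \bigl(1 + \Lip_R(E_s)\bigr) \abs{Q_1(s) - Q_2(s)},
\end{equation*}
where $C$ depends only on $\alpha$, $\beta$ and $R$ (and not on $s$ or on $E$ other than through the Lipschitz constant of $E_s$).

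Plugging this in yields the integral inequality
\begin{equation*}
  \abs{Q_1(t) - Q_2(t)}
  \leq \abs{P_1 - P_2}
  + \int_0^t C\bigl(1 + \Lip_R(E_s)\bigr) \abs{Q_1(s) - Q_2(s)}\,ds,
\end{equation*}
and Gronwall's lemma (in its integral form, valid because $s \mapsto \Lip_R(E_s)$ is bounded on $[0,T]$ thanks to Hypothesis \ref{hyp:E-conditions}) delivers
\begin{equation*}
  \abs{Q_1(t) - Q_2(t)} \leq \abs{P_1 - P_2}\, \exp\!\Bigl( C \int_0^t \bigl(1 + \Lip_R(E_s)\bigr)\,ds \Bigr),
\end{equation*}
which is exactly the claimed bound; the Lipschitz statement for $\calT_E^t$ on $B_R$ then follows by taking the supremum over $P_1, P_2 \in B_R$. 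The only non-routine point to watch is that the constant $C$ coming from Lemma \ref{lem:reg-chareqs} depends on $R$ but not on the particular field $E$, which is what allows the $\Lip_R(E_s)$ factor to appear cleanly inside the exponential; since this is already guaranteed by the previous lemma, there is no genuine obstacle in the argument.
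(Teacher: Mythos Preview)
Your proof is correct and follows exactly the same approach as the paper: write the two trajectories as solutions of the characteristic ODE, subtract and integrate, apply point 2 of Lemma \ref{lem:reg-chareqs} to bound the integrand, and close with Gronwall's lemma. The argument and the resulting constants match the paper's proof essentially line by line.
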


\begin{proof}
  Write $P_i(t) \equiv \calT_{E}^t(P_i) \equiv (X_i(t), V_i(t))$, for
  $i = 1,2$, $t \in [0,T]$.  These functions satisfy the
  characteristic equations \eqref{eq:characteristics}:
  \begin{gather*}
    \frac{d}{dt} P_i = \Psi_{E}(t,P_i), \quad P_i(0) = P_i,    \quad \text{ for } i = 1,2.
  \end{gather*}
  For $t \in [0,T]$, using Lemma \ref{lem:reg-chareqs},
  \begin{align*}
    \abs{P_1(t) - P_2(t)}
    \leq &\,
    \abs{P_1 - P_2}
    +
    \int_0^t \abs{\Psi_{E}(s,P_1(s)) - \Psi_{E}(s,P_2(s))} \,ds
    \\
    \leq &\,
    \abs{P_1 - P_2}
    +
    C \int_0^t (\Lip_R(E_s)+1) \abs{P_1(s) - P_2(s)} \,ds
  \end{align*}
  We get our result by applying Gronwall's Lemma to this inequality.
\end{proof}

\begin{lem}[Regularity of characteristics with respect to time]
  \label{lem:reg-chars-time}
  Take $T > 0$ and a field $E: [0,T] \times \R^d \to \R^d$ satisfying
  Hypothesis \ref{hyp:E-conditions}. Take $P^0 \in \R^d \times
  \R^d$, $R > 0$ and assume that
  \begin{equation*}
    \abs{\calT_{E}^t(P^0)} \leq R,
    \qquad t \in [0,T].
  \end{equation*}
  Then it holds that
  \begin{equation}
    \abs{\calT_{E}^t(P^0) - \calT_{E}^s(P^0)}
    \leq
    C \abs{t-s}
    \quad \text{ for } s, t \in [0,T],
  \end{equation}
  for some constant $C$ which depends only on $\alpha$, $\beta$, $R$
  and $\norm{E}_{L^\infty([0,T] \times B_R)}$.
\end{lem}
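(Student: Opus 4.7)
The plan is to use directly the integral form of the characteristic ODE together with the boundedness of $\Psi_E$ on compact sets (point 1 of Lemma \ref{lem:reg-chareqs}).

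First, I write $P(t) \equiv \calT_E^t(P^0)$ which, by definition of the flow, solves
\begin{equation*}
  \frac{d}{dt} P(t) = \Psi_E(t, P(t)), \qquad P(0) = P^0,
\end{equation*}
and is of class $\mathcal{C}^1$ on $[0,T]$ by Lemma \ref{lem:exist_cont_charact}. For any $s,t \in [0,T]$ with, say, $s \le t$, integrating gives
\begin{equation*}
  \abs{P(t) - P(s)}
  = \Bigl| \int_s^t \Psi_E(\tau, P(\tau)) \, d\tau \Bigr|
  \le \int_s^t \abs{\Psi_E(\tau, P(\tau))} \, d\tau .
\end{equation*}

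The second step is to bound the integrand uniformly. By hypothesis, $P(\tau) \in B_R$ for every $\tau \in [0,T]$, so point 1 of Lemma \ref{lem:reg-chareqs} applied on the ball $B_R$ gives a constant $C$ depending only on $\alpha$, $\beta$, $R$, and $\norm{E}_{L^\infty([0,T]\times B_R)}$ such that $\abs{\Psi_E(\tau, P(\tau))} \le C$. Plugging this into the previous inequality yields $\abs{P(t) - P(s)} \le C (t-s)$; interchanging the roles of $s$ and $t$ handles the opposite case, giving $\abs{\calT_E^t(P^0) - \calT_E^s(P^0)} \le C\abs{t-s}$, which is exactly the desired conclusion.

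There is essentially no obstacle here: the entire content of the lemma is that the characteristic flow, once trapped in a bounded region, is driven by a bounded vector field, and a bounded vector field produces Lipschitz trajectories in time. The only thing worth noting is that the constant $C$ need not depend on $\Lip_R(E)$ (unlike in Lemmas \ref{lem:dep-chars-E} and \ref{lem:reg-chars}), because we only use the sup-norm bound on $\Psi_E$ and not its Lipschitz regularity.
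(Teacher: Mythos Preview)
Your proof is correct and follows exactly the approach of the paper: integrate the characteristic ODE and use point 1 of Lemma \ref{lem:reg-chareqs} to bound $\Psi_E$ uniformly on the ball $B_R$ where the trajectory is assumed to stay. The paper's proof is in fact just a one-sentence reference to this argument.
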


\begin{proof}
  Direct by definition
  of %, $\frac{d}{dt} \calT_E^t(P^0) = \Psi_E(t,\calT_E^t(P_0))$,
  $\calT_E^t(P^0)$ and from point 1 of Lemma
  \ref{lem:reg-chareqs},
  % we know that
  %\begin{equation*}
  %  \abs{ \Psi_E(t,\calT_E^t(P^0)) }
  %  \leq
  %  C
  %  \quad \text{ for } t \in [0,T],
  %\end{equation*}
  %for some number $C$ depending on the allowed quantities,
  as we are assuming that $\calT_E^t(P^0)$ remains on a certain
  compact subset of $\R^d \times
  \R^d$.% The statement directly follows from this.
\end{proof}

\subsection{Existence and uniqueness}

\begin{thm}[Existence and uniqueness of measure solutions]
  \label{thm:Existence}
  Take a potential $U \in \mathcal{C}^1(\R^d)$ such that $\nabla U$ is locally Lipschitz and such that for some $C >
  0$,
  \begin{equation}
    \label{eq:U-growth2}
    \abs{\nabla U(x)} \leq C(1+\abs{x})
    \quad \text{ for all } x \in \R^d ,
  \end{equation}
  and $f_0\in \Po(\R^d \times \R^d)$ with compact
  support. There exists a solution $f$ on $[0,+\infty)$ to equation
  \eqref{eq:swarming} with initial condition $f_0$ in the sense of
  Definition \ref{dfn:solution}. In addition,
  \begin{equation}
    \label{eq:f-continuous}
    f \in \mathcal{C}([0,+\infty); \P_1(\R^d\times \R^d))
  \end{equation}
  and there is some increasing function $R = R(T)$ such that for all $T
  > 0$,
  \begin{equation}
    \label{eq:supp-f}
    \supp f_t \subseteq B_{R(T)} \subseteq \R^d \times \R^d
    \quad \text{ for all } t \in [0,T].
  \end{equation}
  This solution is unique among the family of solutions satisfying
  \eqref{eq:f-continuous} and \eqref{eq:supp-f}.
\end{thm}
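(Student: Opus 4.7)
The plan is to realize the solution as a fixed point of the map $\mathcal{F}:f\mapsto \bigl(t\mapsto \calT_{E[f]}^{t}\# f_0\bigr)$, where $E[f]:=-\grad U*\rho[f]$, working in a complete metric space of continuous curves of compactly-supported measures. Fix $R_0>0$ with $\supp f_0\subseteq B_{R_0}^{2d}$, choose a radius $R>R_0$ and a time $T_0>0$ (both to be tuned below), and let
\[
X_{T_0,R}:=\bigl\{g\in \mathcal{C}([0,T_0];\Po(\R^d\times\R^d))\ :\ g(0)=f_0,\ \supp g_t\subseteq B_R \ \forall t\bigr\},
\]
equipped with $d(f,g):=\sup_{t\in[0,T_0]}W_1(f_t,g_t)$, which is complete because uniform $W_1$-limits of compactly supported measures are compactly supported.

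The first job is to check that $\mathcal{F}$ is well-defined on $X_{T_0,R}$. For $f\in X_{T_0,R}$ the first marginal $\rho[f]_t$ is supported in $B_R^d$; combined with the assumed linear growth and local Lipschitz regularity of $\grad U$, this gives that $E[f]$ is continuous on $[0,T_0]\times\R^d$, locally Lipschitz in $x$, and satisfies the linear-growth bound \eqref{eq:E-growth}. Hence Hypothesis~\ref{hyp:E-conditions} holds and Lemma~\ref{lem:exist_cont_charact} provides the flow $\calT_{E[f]}^{t}$ together with the a priori bound $|\calT_{E[f]}^{t}(P_0)|\leq |P_0|e^{Ct}$, with $C$ depending only on $\alpha,\beta,R$ and $U$. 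Choosing, for instance, $R:=2R_0$ and $T_0\leq C^{-1}\log 2$ ensures that $\mathcal{F}(f)_t$ remains supported in $B_R$, while continuity of $t\mapsto \mathcal{F}(f)_t$ in $W_1$ follows from Lemma~\ref{lem:reg-chars-time} applied pointwise together with dominated convergence (using $|\zeta(\calT_{E[f]}^t P)-\zeta(\calT_{E[f]}^s P)|\leq\Lip(\zeta)\,C|t-s|$ for Lipschitz test $\zeta$ and Kantorovich duality).

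The core of the argument is that $\mathcal{F}$ is a contraction for $T_0$ small. Given $f,g\in X_{T_0,R}$, use the joint plan $(\calT_{E[f]}^{t},\calT_{E[g]}^{t})_{\#}f_0$ as a competitor in \eqref{wpdef} to get
\[
W_1(\mathcal{F}(f)_t,\mathcal{F}(g)_t)\leq \int \bigl|\calT_{E[f]}^{t}(P)-\calT_{E[g]}^{t}(P)\bigr|\,df_0(P).
\]
Lemma~\ref{lem:dep-chars-E} bounds the integrand by $\tfrac{e^{C't}-1}{C'}\sup_s\|E[f]_s-E[g]_s\|_{L^\infty(B_R)}$. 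Since $\grad U$ is locally Lipschitz, for $x\in B_R^d$ the map $y\mapsto -\grad U(x-y)$ is Lipschitz on $B_R^d$ with constant $\Lip_{B_{2R}}(\grad U)$, so the dual formula \eqref{w1char} yields
\[
\|E[f]_s-E[g]_s\|_{L^\infty(B_R)}\leq \Lip_{B_{2R}}(\grad U)\,W_1(\rho[f]_s,\rho[g]_s)\leq \Lip_{B_{2R}}(\grad U)\,W_1(f_s,g_s),
\]
the last step because projection to the first marginal is $1$-Lipschitz for $W_1$. Putting these together gives $d(\mathcal{F}f,\mathcal{F}g)\leq K(T_0)\,d(f,g)$ with $K(T_0)\to 0$ as $T_0\to 0$. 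After a further shrink of $T_0$, Banach's fixed point theorem delivers a unique solution on $[0,T_0]$ inside $X_{T_0,R}$.

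Finally, since $T_0$ and the expansion rate depend only on $R_0$ and the data of the problem, I would iterate the construction starting from $f(T_0)$ (again compactly supported), then $f(2T_0)$, etc. Lemma~\ref{lem:exist_cont_charact} yields the a priori support bound $R(T)$ needed for \eqref{eq:supp-f}, and concatenation produces a global solution in $\mathcal{C}([0,\infty);\Po)$. Uniqueness in the class \eqref{eq:f-continuous}–\eqref{eq:supp-f} is either inherited from the contraction at each step, or proved directly: the same estimate above gives $W_1(f_t,g_t)\leq K(T)\int_0^t W_1(f_s,g_s)\,ds$ for two such solutions, and Gronwall forces $f=g$. The main obstacle is the circular dependence between the support radius of the candidate, the Lipschitz constant of $E[f]$, and the growth of the flow; this is unwound by fixing $R$ \emph{before} choosing $T_0$, so that all constants appearing in Lemmas~\ref{lem:reg-chareqs}–\ref{lem:dep-chars-E} are frozen by the geometry of $B_R$ rather than by the unknown $f$.
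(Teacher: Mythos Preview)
Your proposal is correct and follows essentially the same route as the paper: a Banach fixed-point argument on $\mathcal{C}([0,T_0];\Po)$ restricted to curves supported in a fixed ball $B_R$ with $R=2R_0$, using Lemma~\ref{lem:dep-chars-E} together with the Lipschitz bound $\|E[f]-E[g]\|_{L^\infty(B_R)}\leq \Lip_{2R}(\nabla U)\,W_1(f,g)$ (the paper packages the latter as Lemma~\ref{lem:field-W1-Linfty} and your competitor-plan step as Lemma~\ref{lem:same-f0}). The only cosmetic differences are that the paper controls the support via the speed bound in Lemma~\ref{lem:reg-chareqs}(1) (choosing $T<R_0/C_2$) rather than the exponential estimate \eqref{eq:chars-bounded}, and that it does not impose $g(0)=f_0$ on the contraction space; neither affects the argument.
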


The rest of this section is dedicated to the proof of this result,
for which we will need some previous lemmas. We begin with a
general result on the transportation of a measure by two different
functions:
\begin{lem}
  \label{lem:same-f0}
  Let $P_1,  P_2 : \R^d \to \R^d$ be two Borel measurable functions. Also, take $f \in \P_1(\R^d)$. Then,
  \begin{equation}
    \label{eq:same-f0}
    W_1(P_1 \# f, P_2 \# f)
    \leq
    \norm{P_1 - P_2}_{L^\infty(\supp f)}.
  \end{equation}
\end{lem}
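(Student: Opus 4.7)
The plan is to construct an explicit transference plan between $P_1 \# f$ and $P_2 \# f$ by coupling them through $f$ itself, and then bound the transport cost directly by the pointwise distance between $P_1$ and $P_2$ on the support of $f$.

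Concretely, I would define the map $\Phi : \R^d \to \R^d \times \R^d$ by $\Phi(x) = (P_1(x), P_2(x))$ and set $\pi := \Phi \# f$. The first marginal of $\pi$ is $\pi_1 \# \Phi \# f = (\pi_1 \circ \Phi) \# f = P_1 \# f$, where $\pi_1$ is projection onto the first factor, and similarly the second marginal is $P_2 \# f$. Hence $\pi$ belongs to the set $\Lambda$ of transference plans between $P_1\# f$ and $P_2 \# f$ appearing in \eqref{wpdef}.

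Then by the definition \eqref{wpdef} of $W_1$ as an infimum over $\Lambda$, together with the change of variables formula \eqref{eq:pushforward} for the push-forward,
\begin{equation*}
  W_1(P_1 \# f, P_2 \# f)
  \leq \int_{\R^d \times \R^d} |y_1 - y_2| \, d\pi(y_1,y_2)
  = \int_{\R^d} |P_1(x) - P_2(x)| \, df(x).
\end{equation*}
Since the integrand vanishes outside $\supp f$, the last integral is bounded by $\norm{P_1 - P_2}_{L^\infty(\supp f)}$, which gives \eqref{eq:same-f0}.

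There is no real obstacle here; the only point to be careful about is verifying that the marginals of $\pi = \Phi \# f$ are indeed $P_1 \# f$ and $P_2 \# f$, which follows immediately from the functorial property $(g \circ h) \# \mu = g \# (h \# \mu)$. Measurability of $\Phi$ is automatic from the Borel measurability of $P_1$ and $P_2$, and $\pi$ is automatically a probability measure since $f$ is.
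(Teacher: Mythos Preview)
Your proof is correct and follows essentially the same approach as the paper: the transference plan $\pi = \Phi \# f$ with $\Phi(x) = (P_1(x), P_2(x))$ is exactly the paper's $\pi := (P_1 \times P_2)\# f$, and the remaining estimate is identical. The only difference is that you spell out the marginal check via functoriality of push-forward, whereas the paper simply asserts it.
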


\begin{proof}
  We consider a transference plan defined by  $\pi:=(P_1\times P_2)\#f$. One can check
  that this measure has marginals $P_1 \# f$, $P_2 \# f$. Then,
  \begin{align*}
    W_1(P_1 \# f, P_2 \# f)
    \leq &\,
    \int_{\R^d \times \R^d} \abs{x-y} \pi(x,y) \,dx \,dy
    \\
    =&\,
    \int_{\R^d} \abs{P_1(x) - P_2(x)}\, f(x) \,dx
    \leq
    \norm{P_1 - P_2}_{L^\infty(\supp f)},
  \end{align*}
  which proves the lemma.
\end{proof}

\begin{lem}[Continuity with respect to time]
  \label{lem:time-W1-continuity}
  Take $T > 0$ and a field $E: [0,T] \times \R^d \to \R^d$ in the
  conditions of Hypothesis \ref{hyp:E-conditions}. Take also a measure
  $f$ on $\R^d \times \R^d$ with compact support contained in the ball
  $B_R$.

  Then, there exists $C > 0$ depending only on $\alpha$, $\beta$, $R$
  and $\norm{E}_{L^\infty([0,T] \times B_R)}$ such that
  $$W_1(\calT_E^s\#f,\calT_E^t\#f) \leq C \abs{t-s},
  \quad \text{ for any } t,s\in[0,T].$$
\end{lem}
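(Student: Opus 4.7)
My plan is to chain together three of the preceding lemmas. The whole argument is essentially transport along characteristics plus a pointwise-in-time Lipschitz bound.

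First, I would observe that the maps $\calT_E^s, \calT_E^t : \R^d \times \R^d \to \R^d \times \R^d$ are continuous (hence Borel measurable), so Lemma \ref{lem:same-f0} applies directly to the measure $f$ with these two transport maps, giving
\begin{equation*}
  W_1(\calT_E^s \# f, \calT_E^t \# f)
  \leq \norm{\calT_E^s - \calT_E^t}_{L^\infty(\supp f)}.
\end{equation*}
This reduces the task to a pointwise estimate of $|\calT_E^s(P^0) - \calT_E^t(P^0)|$ for $P^0 \in \supp f \subseteq B_R$.

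Next I need a compact set that contains all the characteristic curves starting from $\supp f$. This is exactly what Lemma \ref{lem:exist_cont_charact} provides: since $|P^0| \leq R$, there is a constant $C_1$ depending on $T$, $R$, $\alpha$, $\beta$ and the growth constant of $E$ (which is controlled once we know $\alpha$, $\beta$, $R$ and the $L^\infty$ size of $E$ on the relevant ball) such that
\begin{equation*}
  \abs{\calT_E^\tau(P^0)} \leq R' := R \, e^{C_1 T}
  \quad \text{for all } \tau \in [0,T],\ P^0 \in \supp f.
\end{equation*}
Thus every trajectory of interest stays in $B_{R'}$.

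Finally, I apply Lemma \ref{lem:reg-chars-time} with this radius $R'$: for each $P^0 \in \supp f$ and $s,t \in [0,T]$,
\begin{equation*}
  \abs{\calT_E^s(P^0) - \calT_E^t(P^0)} \leq C_2 \abs{t-s},
\end{equation*}
where $C_2$ depends only on $\alpha$, $\beta$, $R'$ and $\norm{E}_{L^\infty([0,T] \times B_{R'})}$, which are themselves controlled in terms of $\alpha$, $\beta$, $R$, $T$ and $\norm{E}_{L^\infty([0,T] \times B_R)}$ (using the linear growth of $E$ to pass from bounds on $B_R$ to bounds on $B_{R'}$). Taking the supremum over $P^0 \in \supp f$ and combining with the first step yields the desired estimate $W_1(\calT_E^s \# f, \calT_E^t \# f) \leq C \abs{t-s}$.

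The only mildly delicate point is the bookkeeping of constants, namely making sure that the enlarged radius $R'$ and the enlarged $L^\infty$-bound on $E$ needed to apply Lemma \ref{lem:reg-chars-time} can still be absorbed into a constant depending on the parameters listed in the statement; this follows from the linear growth hypothesis on $E$ in Hypothesis \ref{hyp:E-conditions}. There is no real obstacle — the lemma is essentially an immediate corollary of the three preceding results.
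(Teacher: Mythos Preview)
Your proof is correct and follows exactly the same route as the paper's: apply Lemma~\ref{lem:same-f0} to reduce to a pointwise bound on $\calT_E^s - \calT_E^t$ over $\supp f$, then invoke Lemma~\ref{lem:reg-chars-time}. You are simply more explicit than the paper about the intermediate step --- using Lemma~\ref{lem:exist_cont_charact} to verify the confinement hypothesis needed for Lemma~\ref{lem:reg-chars-time} and tracking the resulting enlargement of the radius --- which the paper's two-line proof leaves implicit.
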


\begin{proof}
  From Lemma \ref{lem:same-f0} and the continuity of characteristics
  with respect to time, Lemma \ref{lem:reg-chars-time}, we get
  \begin{equation*}
    W_1(\calT_E^s\#f,\calT_E^t\#f)
    \leq
    \|\calT_E^s-\calT_E^t\|_{L^\infty(\supp f)}
    \leq
    C \abs{t-s},
  \end{equation*}
  for some $C > 0$ which depends only on the quantities in the lemma.
\end{proof}

\begin{lem}
  \label{lem:same-T_[f]}
  Take a locally Lipschitz map $\calT : \R^d \to \R^d$ and $f,g \in
  \P_1(\R^d)$, both with compact support contained in the ball
  $B_R$. Then,
  \begin{equation}
    \label{eq:same-T_[f]}
    W_1(\calT \# f, \calT \# g)
    \leq
    L\,W_1(f,g),
  \end{equation}
  where $L$ is the Lipschitz constant of $\calT$ on the ball $B_R$.
\end{lem}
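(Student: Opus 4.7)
The plan is to lift the desired estimate to the level of transference plans, using the dual characterization of $W_1$ from Proposition \ref{w2properties}(i) rather than the supremum characterization \eqref{w1char}. The latter would force one to estimate $\varphi\circ\calT$ in Lipschitz norm, which is problematic since $\calT$ is only \emph{locally} Lipschitz; working with transference plans lets us confine everything to the ball $B_R$ where the Lipschitz bound on $\calT$ actually holds.

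First I would invoke Proposition \ref{w2properties}(i) to pick an optimal plan $\pi \in \Lambda$ between $f$ and $g$, so that
\begin{equation*}
  W_1(f,g) = \int_{\R^d \times \R^d} \abs{P_1 - P_2} \, d\pi(P_1,P_2).
\end{equation*}
Since $f$ and $g$ are supported in $B_R$, the plan $\pi$ is supported in $B_R \times B_R$ (every transference plan has support contained in $\supp f \times \supp g$, because the marginal of $\pi$ on any set disjoint from $\supp f$ or $\supp g$ must vanish).

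Next I would consider the candidate plan $\tilde\pi := (\calT \times \calT) \# \pi$. A direct computation with the push-forward definition \eqref{eq:pushforward} shows that the first marginal of $\tilde\pi$ is $\calT \# f$ and the second is $\calT \# g$, so $\tilde\pi$ is a valid transference plan between $\calT \# f$ and $\calT \# g$. Plugging $\tilde\pi$ into \eqref{wpdef}, changing variables back via push-forward, and using that $\pi$ sees only points in $B_R \times B_R$ where $\calT$ has Lipschitz constant $L$, I would get
\begin{equation*}
  W_1(\calT \# f, \calT \# g)
  \leq \int \abs{x-y} \, d\tilde\pi(x,y)
  = \int \abs{\calT(P_1) - \calT(P_2)} \, d\pi(P_1,P_2)
  \leq L \int \abs{P_1 - P_2} \, d\pi(P_1,P_2),
\end{equation*}
and the right-hand side equals $L\, W_1(f,g)$ by optimality of $\pi$.

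There is no real obstacle here; the only point worth being careful about is the support argument that lets one upgrade "$\calT$ is locally Lipschitz" to an actual Lipschitz bound inside the integrand, and this is handled cleanly by the fact that optimal plans inherit support from their marginals. This is analogous to the proof of Lemma \ref{lem:same-f0}, but now using a genuine optimal coupling rather than the diagonal coupling $(P_1 \times P_2)\#f$.
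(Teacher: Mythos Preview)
Your proof is correct and follows essentially the same approach as the paper: choose an optimal plan $\pi$ between $f$ and $g$, push it forward by $\calT\times\calT$ to obtain an admissible plan between $\calT\#f$ and $\calT\#g$, and then use the Lipschitz bound on $B_R$ together with $\supp\pi\subseteq B_R\times B_R$. The paper's argument is identical in structure, only slightly more terse about the support and marginal verifications that you spell out.
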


\begin{proof}
  Set $\pi$ to be an optimal transportation plan between $f$ and $g$. The
  measure $\gamma=(\calT \times \calT) \# \pi$ has marginals $\calT \# f$ and $\calT \# g$,
  as can be easily checked, so we can use it to bound $W_1(\calT \# f, \calT \# g)$:
  \begin{align*}
    W_1(\calT \# f, \calT \# g)
    \leq &\,
    \int_{\R^d \times \R^d} \!\!\!\!\abs{z-w} \, \gamma(z,w) \,dz \,dw
    =
    \int_{\R^d \times \R^d} \!\!\!\!\abs{\calT(z)-\calT(w)} \pi(z,w) \,dz \,dw
    \\
    \leq &\,
    L \int_{\R^d \times \R^d} \abs{z-w} \pi(z,w) \,dz \,dw
    =
    L\, W_1(f,g),
  \end{align*}
  using that the support of $\pi$ is contained in $B_R \times B_R$, as
  both $f$ and $g$ have support inside $B_R$.
\end{proof}

Recalling that $E[f]:=\nabla U\ast \rho$, the properties of
convolution lead immediately to the following information:

\begin{lem}
  \label{lem:lipschitz-field}
  Take a potential $U:\R^d \to \R$ in the conditions of Theorem
  \ref{thm:Existence}, and a measure $f \in \Po(\R^d
  \times \R^d)$ with support contained in a ball $B_R$. Then,
  \begin{equation}
    \label{eq:E-bounded}
    \norm{E[f]}_{L^\infty(B_R)}
    \leq
    \norm{\nabla U}_{L^\infty(B_{2R})},
  \end{equation}
  and
  \begin{equation}
    \label{eq:E-Lipschitz}
    \Lip_R(E[f]) \leq \Lip_{2R}(\nabla U).
  \end{equation}
\end{lem}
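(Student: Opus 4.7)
The plan is to exploit the convolution structure $E[f](x) = (\nabla U * \rho)(x) = \int_{\R^d} \nabla U(x-y)\, d\rho(y)$ together with the fact that $\supp \rho \subseteq B_R \subseteq \R^d$, which follows at once from $\supp f \subseteq B_R \subseteq \R^d \times \R^d$ and the definition of $\rho$ as the first marginal of $f$.

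First I would prove \eqref{eq:E-bounded}. For any $x \in B_R \subseteq \R^d$ and any $y \in \supp \rho \subseteq B_R$, the difference $x-y$ lies in $B_{2R} \subseteq \R^d$, so $\abs{\nabla U(x-y)} \leq \norm{\nabla U}_{L^\infty(B_{2R})}$. Integrating against $\rho$, which is a probability measure, yields
\begin{equation*}
  \abs{E[f](x)} \leq \int_{B_R} \abs{\nabla U(x-y)}\, d\rho(y) \leq \norm{\nabla U}_{L^\infty(B_{2R})}.
\end{equation*}

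Next I would establish \eqref{eq:E-Lipschitz} by the same change-of-variables trick. For $x_1, x_2 \in B_R$ and $y \in B_R$, both $x_1-y$ and $x_2-y$ lie in $B_{2R}$, so the local Lipschitz hypothesis on $\nabla U$ gives $\abs{\nabla U(x_1-y) - \nabla U(x_2-y)} \leq \Lip_{2R}(\nabla U)\,\abs{x_1-x_2}$. Integrating against the probability measure $\rho$ yields
\begin{equation*}
  \abs{E[f](x_1) - E[f](x_2)} \leq \Lip_{2R}(\nabla U)\,\abs{x_1-x_2},
\end{equation*}
which is precisely the required bound on $\Lip_R(E[f])$.

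There is no substantive obstacle here; the argument is a routine convolution estimate, and the only bookkeeping point is to keep track of whether the ball $B_R$ refers to the subset of $\R^d$ (for evaluating $E[f]$ and for the support of $\rho$) or of $\R^d \times \R^d$ (for the support of $f$), noting that the projection of the larger ball into $\R^d$ is contained in the smaller ball of the same radius.
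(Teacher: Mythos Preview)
Your argument is correct and is precisely the routine convolution estimate that the paper has in mind; the paper does not even write out a proof, stating only that the result follows immediately from the properties of convolution.
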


\begin{lem}
  \label{lem:field-W1-Linfty}
  For $f,g \in \P_1(\R^d \times \R^d)$ and $R > 0$ it holds that
  \begin{equation}
    \label{eq:field-W1-Linfty}
    \norm{E[f] - E[g]}_{L^\infty(B_R)}
    \leq \mathrm{Lip}_{2R}(\nabla U) W_1(f,g).
  \end{equation}
\end{lem}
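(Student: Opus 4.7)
The plan is to combine the convolution structure $E[f] = \nabla U \ast \rho[f]$ with the coupling representation of $W_1$. Fix $x \in B_R$. Writing the difference as an integral against $\rho[f] - \rho[g]$ and lifting to a transference plan for the full measures $f, g$ on $\R^d \times \R^d$ reduces the estimate to an application of the local Lipschitz property of $\nabla U$ on $B_{2R}$.

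Concretely, let $\pi$ be an optimal transference plan between $f$ and $g$, so $\pi$ is a probability measure on $(\R^d \times \R^d) \times (\R^d \times \R^d)$ with marginals $f$ and $g$ and
\begin{equation*}
  W_1(f,g) = \int \abs{(y_1,w_1) - (y_2,w_2)}\, d\pi((y_1,w_1),(y_2,w_2)).
\end{equation*}
Since $\rho[f]$ and $\rho[g]$ are the position marginals of $f$ and $g$, for every $x \in B_R$ one has
\begin{equation*}
  E[f](x) - E[g](x)
  = \int \bigl(\nabla U(x - y_1) - \nabla U(x - y_2)\bigr)\, d\pi((y_1,w_1),(y_2,w_2)).
\end{equation*}
In the setting in which this lemma will be used (consistent with the compactly supported framework of Theorem \ref{thm:Existence} and the previous Lemma \ref{lem:lipschitz-field}), both $f$ and $g$ are supported in $B_R \subseteq \R^d\times\R^d$, so $\pi$ is supported on $B_R \times B_R$ and hence $x - y_1, x - y_2 \in B_{2R}$. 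Applying the Lipschitz bound on $\nabla U$ on $B_{2R}$ gives
\begin{equation*}
  \abs{\nabla U(x-y_1) - \nabla U(x-y_2)}
  \leq \mathrm{Lip}_{2R}(\nabla U) \abs{y_1 - y_2}
  \leq \mathrm{Lip}_{2R}(\nabla U) \abs{(y_1,w_1) - (y_2,w_2)},
\end{equation*}
and integrating against $\pi$ yields $\abs{E[f](x) - E[g](x)} \leq \mathrm{Lip}_{2R}(\nabla U) W_1(f,g)$. Taking the supremum over $x \in B_R$ concludes the argument.

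The proof is essentially a bookkeeping exercise once the coupling representation is in place, so there is no serious technical obstacle. The only delicate point is the implicit localization: the statement as written bounds an $L^\infty$ norm on $B_R$ by the \emph{local} Lipschitz constant $\mathrm{Lip}_{2R}(\nabla U)$, which is only meaningful when the position components of $\supp f$ and $\supp g$ lie in $B_R$ (otherwise one would need a global Lipschitz constant for $\nabla U$). This matches the compact-support convention adopted throughout Section \ref{sec:swarming_model}, and is where the choice of the factor $2$ in the radius $2R$ comes from—the translate $x-y$ ranges over $B_{2R}$ as $x$ and $y$ each range over $B_R$.
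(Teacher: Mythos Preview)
Your proof is correct and follows essentially the same approach as the paper: pick an optimal transference plan $\pi$ between $f$ and $g$, express $E[f](x)-E[g](x)$ as an integral of $\nabla U(x-y_1)-\nabla U(x-y_2)$ against $\pi$, and apply the local Lipschitz bound $\Lip_{2R}(\nabla U)$ using that $\pi$ is supported in $B_R\times B_R$. Your explicit remark about the implicit compact-support assumption (needed to justify the appearance of $\Lip_{2R}$ rather than a global constant) is a point the paper's proof also uses without stating it in the lemma hypotheses.
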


\begin{proof}
  Take $\pi$ to be an optimal transportation plan between the
  measures $f$ and $g$. Then, for any $x \in B_R$, using that $\pi$
  has marginals $f$ and $g$,
  \begin{align*}
    E[f](x)&\, - E[g](x)
    =
    \int_{\R^d} (\rho[f](y) - \rho[g](y)) \nabla U(x-y) \,dy
    \\
    = &\,
    \int_{\R^d\times\R^d} f(y,v) \nabla U(x-y) \,dy \,dv
    - \int_{\R^d\times\R^d} g(z,w) \nabla U(x-z) \,dz \,dw
    \\
    = &\,
    \int_{\R^{4d}} (\nabla U(x-y) - \nabla U(x-z))
    \,d \pi(y,v,z,w).
  \end{align*}
  Taking absolute value,
  \begin{align*}
    \abs{E[f](z) - E[g](z)}
    \leq &\,
    \int_{\R^{4d}} |\nabla U(x-y) - \nabla U(x-z)| \,d \pi(y,v,z,w)\\
    \leq &\,
    \Lip_{2R}(\nabla U)
    \int_{\R^{4d}} \abs{y - z} \,d \pi(y,v,z,w)
    \leq
    \Lip_{2R}(\nabla U) W_1(f,g),
  \end{align*}
  using that $\pi(y,v,z,w)$ has support on $B_R \times B_R \subseteq
  \R^{4d}$.
\end{proof}

We can now give the proof of the existence and uniqueness result.

\begin{proof}[Proof of theorem \ref{thm:Existence}]
  Take $f_0 \in \P^1(\R^d \times \R^d)$ with support contained in a
  ball $B_{R^0} \subseteq \R^d \times \R^d$, for some $R^0 > 0$. We
  will prove local existence and uniqueness of solutions by a
  contraction argument in the metric space $\mathcal{F}$ formed by all
  the functions $f \in \mathcal{C}([0,T], \P_1(\R^d \times \R^d))$
  such that the support of $f_t$ is contained in $B_R$ for all $t
  \in [0,T]$, where $R := 2 R^0$ and $T > 0$ is a fixed number to be
  chosen later. Here, we consider the distance in $\mathcal{F}$ given
  by
  \begin{equation}
    \label{eq:distance-F}
    \mathcal{W}_1(f,g) := \sup_{t \in [0,T]} W_1(f_t,g_t).
  \end{equation}

  Let us define an operator on this space for which a fixed point will
  be a solution to the swarming equation \eqref{eq:swarming}. For $f
  \in \mathcal{F}$, consider $E[f] := \nabla U * \rho[f]$. Then,
  $E[f]$ satisfies Hypothesis \ref{hyp:E-conditions} (because of the
  above two Lemmas \ref{lem:lipschitz-field} and
  \ref{lem:field-W1-Linfty}, and the bound (\ref{eq:U-growth2}) on
  $\nabla U$) and we can define
  \begin{equation}
    \label{eq:def-Gamma}
    \Gamma[f](t) := \calT_{E[f]}^t \# f_0.
  \end{equation}
  In other words, $\Gamma[f]$ is the solution of the swarming
  equations obtained through the method of characteristics, with field
  $E[f]$ assumed known, and with initial condition $f_0$ at $t=0$.

  Clearly, a fixed point of $\Gamma$ is a solution to
  eq. \eqref{eq:swarming} on $[0,T]$. In order for $\Gamma$ to be well
  defined, we need to prove that $\Gamma[f]$ is again in the space
  $\mathcal{F}$, for which we need to choose $T$ appropriately. To do
  this, observe that from eq. \eqref{eq:E-bounded} in Lemma
  \ref{lem:lipschitz-field} we have
  \begin{equation*}
    \norm{E[f]}_{L^\infty([0,T] \times B_R)}
    \leq
    \norm{\nabla U}_{L^\infty(B_{2R})} =: C_1,
  \end{equation*}
  and from point 1 in lemma \ref{lem:reg-chareqs},
  \begin{equation*}
    \abs{ \frac{d}{dt} \calT_{E[f]}^t (P) }
    \leq
    C_2,
  \end{equation*}
  for all $P \in B_{R^0} \subseteq \R^d \times \R^d$, and some $C_2 >
  0$ which depends only on $\alpha$, $\beta$, $R_0$ and
  $C_1$. Choosing any $T < R^0/C_2$ one easily sees that
  $\calT_{E[f]}^t \# f_0$ has support contained in $B_R$, for all $t
  \in [0,T]$ (recall that we set $R := 2R^0$). Then, for each $t \in
  [0,T]$, $\Gamma[f](t) \in \Po(\R^d \times \R^d)$, as follows from
  mass conservation, the support of $\Gamma[f](t)$ is contained in
  $B_R$ (we just chose $T$ for this to hold), and the function $t
  \mapsto \Gamma[f](t)$ is continuous, as shown by Lemma
  \ref{lem:time-W1-continuity}.
%   \begin{enumerate}
%   \item For each $t \in [0,T]$, $\Gamma[f](t) \in \Po(\R^d \times \R^d)$, as follows
%     from mass conservation (by definition, $P_{E[f]}^t \# f_0$ has
%     mass $1$).
%   \item For $t \in [0,T]$, the support of $\Gamma[f](t)$ is contained
%     in $B_R$, as we just chose $T$ for this to hold.
%   \item The function $t \mapsto \Gamma[f](t)$ is continuous, as shown
%     by Lemma \ref{lem:time-W1-continuity}.
%   \end{enumerate}
  %
  This shows that the map $\Gamma: \mathcal{F} \to \mathcal{F}$ is well
  defined.

  Let us prove now that this map is contractive (for which we will
  have to restrict again the choice of $T$). Take two functions $f, g
  \in \mathcal{F}$, and consider $\Gamma[f], \Gamma[g]$; we want to
  show that
  \begin{equation}
    \label{eq:Gamma-Lipschitz}
    \mathcal{W}_1(\Gamma[f], \Gamma[g]) \leq C\, \mathcal{W}_1(f, g)
  \end{equation}
  for some $0 < C < 1$ which does not depend on $f$ and $g$. Using
  \eqref{eq:distance-F} and \eqref{eq:def-Gamma},
  \begin{equation}
    \label{eq:L1}
    \mathcal{W}_1(\Gamma[f], \Gamma[g])
    = \sup_{t \in [0,T]} W_1(\calT_{E[f]}^t \# f_0 , \calT_{E[g]}^t \# f_0),
  \end{equation}
  and hence we need to estimate the above quantity for each $t \in [0,
  T)$. For $t \in [0,T]$, use lemmas \ref{lem:same-f0},
  \ref{lem:dep-chars-E} and \ref{lem:field-W1-Linfty} to write
  \begin{align*}
    W_1(\calT_{E[f]}^t \# f_0 , \calT_{E[g]}^t \# f_0) &\,\leq \norm{\calT_{E[f]}^t -
      \calT_{E[g]}^t}_{L^\infty(\supp f_0)}
    \\ &\,
    \leq C(t) \sup_{s \in [0,T]} \norm{E[f_s] - E[g_s]}_{L^\infty(B_R)}
    \\ &\,
    \leq C(t)\, L \, \sup_{s \in [0,T]} W_1(f_s, g_s)
    =
    C(t)\, L \, \mathcal{W}_1(f,g),
  \end{align*}
  where $C(t)$ is the function $(e^{C_3t}-1)/C_3$ which appears in
  eq. \eqref{eq:dep-chars-E}, for some constant $C_3$ which depends
  only on $\alpha$, $\beta$, $R$, and the Lipschitz constant $L$ of
  $\grad U$ on $B_{2R}$ (see eq. \eqref{eq:E-Lipschitz}). Clearly,
  \begin{equation}
    \label{eq:small-Lipschitz-constant}
    \lim_{t \to 0} C(t) = 0.
  \end{equation}
  With \eqref{eq:L1}, this finally gives
  \begin{equation*}
    \mathcal{W}_1(\Gamma[f], \Gamma[g])
    \leq
    C(T)\, L\,\mathcal{W}_1(f,g).
  \end{equation*}
  Taking into account \eqref{eq:small-Lipschitz-constant}, we can additionally choose
  $T$ small enough so that $C(T) L < 1$. For such $T$, $\Gamma$ is
  contractive, and this proves that there is a unique fixed point of
  $\Gamma$ in $\mathcal{F}$, and hence a unique solution $f \in
  \mathcal{F}$ of eq. \eqref{eq:swarming}.

  Finally, as mass is conserved, by usual arguments one can extend
  this solution as long as the support of the solution remains
  compact. Since in our case the growth of characteristics is bounded
  (see Lemma \ref{lem:exist_cont_charact}), one can construct a unique
  global solution satisfying \eqref{eq:f-continuous} and
  \eqref{eq:supp-f}.
\end{proof}

\subsection{Stability} \label{sec:stability}

\begin{thm}
  \label{thm:stability}
  Take a potential $U$ in the conditions of Theorem
  \ref{thm:Existence}, and $f_0$, $g_0$ measures on $\R^d \times \R^d$
  with compact support, and consider the solutions $f,g$ to
  eq. \eqref{eq:swarming} given by Theorem \ref{thm:Existence} with
  initial data $f_0$ and $g_0$, respectively.

  Then, there exists a strictly increasing smooth function
  $r(t):[0,\infty)\longrightarrow \R^+_0$ with $r(0)=1$ depending only
  on the size of the support of $f_0$ and $g_0$, such that
  \begin{equation}
    \label{eq:stability}
    W_1(f_t, g_t)
    \leq
    r(t)\, W_1(f_0, g_0),
    \quad
    t \geq 0.
  \end{equation}
\end{thm}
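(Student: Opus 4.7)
The plan is to lift the stability estimate to the level of characteristics: transport an optimal plan between $f_0$ and $g_0$ through the two flows simultaneously, compare the trajectories pointwise, and close the loop with a Grönwall-type argument using the Lipschitz dependence of the field $E[\cdot]$ on the measure.

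First I would invoke Theorem \ref{thm:Existence} to obtain, for any $T > 0$, a radius $R = R(T)$ depending only on the sizes of $\supp f_0$ and $\supp g_0$ (and on $T$) such that $\supp f_t \cup \supp g_t \subseteq B_R$ for all $t \in [0, T]$. I would then fix an optimal transference plan $\pi_0 \in \Po(\R^{2d} \times \R^{2d})$ between $f_0$ and $g_0$, and note that by Definition \ref{dfn:solution} the push-forward $\pi_t := (\calT_{E[f]}^t \times \calT_{E[g]}^t)\#\pi_0$ has marginals $f_t$ and $g_t$, so it is an admissible candidate in the infimum defining $W_1(f_t, g_t)$.

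Next, for a generic $(P_1^0, P_2^0) \in \supp \pi_0$, I would track $P_1(t) := \calT_{E[f]}^t(P_1^0)$ and $P_2(t) := \calT_{E[g]}^t(P_2^0)$, both of which lie in $B_R$ on $[0,T]$. Adding and subtracting $\Psi_{E[f]}(s, P_2(s))$ under the Duhamel representation
\begin{equation*}
  \abs{P_1(t) - P_2(t)} \leq \abs{P_1^0 - P_2^0} + \int_0^t \abs{\Psi_{E[f]}(s, P_1(s)) - \Psi_{E[g]}(s, P_2(s))}\, ds,
\end{equation*}
Lemma \ref{lem:reg-chareqs} (local Lipschitz regularity of $\Psi$) and Lemma \ref{lem:dep-chareqs-E} (dependence of $\Psi$ on the field) combine with Lemma \ref{lem:lipschitz-field} (to control $\Lip_R(E[f])$ by $\Lip_{2R}(\nabla U)$) and Lemma \ref{lem:field-W1-Linfty} (to bound $\norm{E[f]_s - E[g]_s}_{L^\infty(B_R)}$ by $\Lip_{2R}(\nabla U)\,W_1(f_s,g_s)$) to yield
\begin{equation*}
  \abs{P_1(t) - P_2(t)} \leq \abs{P_1^0 - P_2^0} + A \int_0^t \abs{P_1(s) - P_2(s)}\, ds + A \int_0^t W_1(f_s, g_s)\, ds,
\end{equation*}
for some constant $A = A(T)$ depending only on $\alpha$, $\beta$, $R(T)$ and $\Lip_{2R(T)}(\nabla U)$.

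Finally, integrating against $\pi_0$ and setting $\phi(t) := \int \abs{P_1(t) - P_2(t)}\, d\pi_0$ I obtain $\phi(0) = W_1(f_0, g_0)$, $W_1(f_s, g_s) \leq \phi(s)$ (by admissibility of $\pi_s$), and the closed Grönwall inequality $\phi(t) \leq W_1(f_0, g_0) + 2A \int_0^t \phi(s)\, ds$ on $[0, T]$, which gives $W_1(f_t, g_t) \leq e^{2A(T) t} W_1(f_0, g_0)$ for $t \in [0, T]$. I expect the main difficulty not to be the analytic estimate itself, which is quite clean once the above scheme is in place, but a bookkeeping issue: the constant $A$ depends on $T$ through $R(T)$, so to assemble a single smooth, strictly increasing function $r:[0,\infty) \to \R_0^+$ with $r(0)=1$ valid for all times one needs either to mollify $T \mapsto e^{2A(T)T}$ into a smooth majorant or to define $r(t) := e^{2\tilde{A}(t) t}$ with $\tilde A$ a smooth nondecreasing upper envelope of $A$.
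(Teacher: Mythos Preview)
Your argument is correct and yields the same Gr\"onwall inequality as the paper, but the route is organized differently. The paper splits via the triangle inequality,
\begin{equation*}
  W_1(f_t,g_t) \leq W_1(\calT_{E[f]}^t\#f_0,\calT_{E[g]}^t\#f_0) + W_1(\calT_{E[g]}^t\#f_0,\calT_{E[g]}^t\#g_0),
\end{equation*}
and handles the two pieces separately with Lemmas \ref{lem:same-f0} and \ref{lem:same-T_[f]} (same data, different fields; same field, different data), invoking Lemmas \ref{lem:dep-chars-E}, \ref{lem:reg-chars} and \ref{lem:field-W1-Linfty} to reduce to an integral inequality in $W_1(f_s,g_s)$ alone. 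You instead push the optimal plan $\pi_0$ forward by the product flow and compare trajectories pointwise, merging both effects into a single estimate on $\abs{P_1(t)-P_2(t)}$ before integrating against $\pi_0$. Your scheme is the classical Dobrushin coupling argument; it is a bit more direct and avoids the intermediate decomposition, while the paper's version is more modular in that each step is an already-proved lemma. The final rate is the same (an exponential with a time-dependent constant, hence a double exponential once the growth of $R(T)$ is fed in), and your remark about the bookkeeping needed to produce a single smooth $r$ applies equally to the paper's proof, which also leaves this as a comment.
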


\begin{proof}
  Fix $T > 0$, and take $R > 0$ such that $\supp f_t$ and $\supp g_t$
  are contained in $B_R$ for $t \in [0,T]$ (which can be done thanks
  to theorem \ref{thm:Existence}). For $t \in [0,T]$, call $L_t$ the
  Lipschitz constant of $\calT_{E[g]}^t$ on $B_R$, and notice that
  from lemmas \ref{lem:reg-chars} and \ref{lem:lipschitz-field} we
  have
  \begin{equation}
    \label{eq:st-proof1}
    L_t \leq e^{C_1 t}, \qquad t \in [0,T]
  \end{equation}
  for some allowed constant $C_1 > 0$. Then we have, using lemmas
  \ref{lem:same-f0}, \ref{lem:same-T_[f]}, \ref{lem:dep-chars-E} and
  \ref{lem:field-W1-Linfty},
  \begin{align*}
    W_1(f_t,g_t)
    = &\,
    W_1(\calT_{E[f]}^t \# f_0, \calT_{E[g]}^t \# g_0)
    \\
    \leq &\,
    W_1(\calT_{E[f]}^t \# f_0, \calT_{E[g]}^t \# f_0)
    +
    W_1(\calT_{E[g]}^t \# f_0, \calT_{E[g]}^t \# g_0)
    \\
    \leq &\,
    \norm{\calT_{E[f]}^t - \calT_{E[g]}^t}_{L^\infty(\supp f_0)}
    +
    L_t\, W_1(f_0, g_0)
    \\
    \leq &\,
    C_2 \int_0^t e^{C_2(t-s)} \norm{E[f_s] - E[g_s]}_{L^\infty(B_R)} \,ds
    +
    L_t\, W_1(f_0, g_0)
    \\
    \leq &\,
    C_2 \lip_{2R}(\nabla U) \int_0^t e^{C_2(t-s)} W_1(f_s, g_s) \,ds
    +
    e^{C_1 t}\, W_1(f_0, g_0).
  \end{align*}
  Calling $C = \max\{C_1, C_2, C_2\lip_{2R}(\nabla U)\}$ and
  multiplying by $e^{-Ct}$,
  \begin{equation*}
    e^{-Ct}
    W_1(f_t,g_t)
    \leq
    C \int_0^t e^{-Cs} W_1(f_s, g_s) \,ds
    +
    W_1(f_0, g_0),
    \qquad t \in [0,T],
  \end{equation*}
  and then by Gronwall's Lemma,
  \begin{equation*}
    e^{-Ct} W_1(f_t,g_t)
    \leq
    W_1(f_0,g_0) \, e^{Ct},
    \quad t \in [0,T],
  \end{equation*}
  which proves our result. We point out that the particular rate
  function $r(t)$ can be obtained by carefully looking at the
  dependencies on time of the constants above, leading to double
  exponentials.
\end{proof}

\begin{rem}[Possible generalizations]
As in Remark \ref{rem:unboundE}, by assuming more restrictive
growth properties at infinity of the potential $U$, we may weaken
the requirements on the support of the initial data allowing $f_0$
with bounded first moment for instance. We do not follow this
strategy in the present work.
\end{rem}

\subsection{Regularity}
\label{sec:regularity}

If the initial condition for eq. \eqref{eq:swarming} is more regular
than a general measure on $\R^d \times \R^d$ one can easily prove
that the solution $f$ is also more regular. For example, if $f_0$
is Lipschitz, then $f_t$ is Lipschitz for all $t \geq 0$. We will
show this next.

\begin{lem}
  \label{lem:f-Lip}
  Take an integrable function $f_0:\R^d \times \R^d \to [0,+\infty)$,
  with compact support, and assume that $f_0$ is also Lipschitz. Take
  also a potential $U \in \mathcal{C}^2(\R^D)$.

  Consider the global solution $f$ to eq. \eqref{eq:swarming} with
  initial condition $f_0$ given by Theorem \ref{thm:Existence}. Then,
  $f_t$ is Lipschitz for all $t \geq 0$.
\end{lem}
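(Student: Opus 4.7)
The plan is to write $f_t$ explicitly as a pushforward via the characteristic flow and then show each factor in the resulting change-of-variables formula is Lipschitz on the relevant compact set. Fix $T > 0$ and pick $R > 0$ so that $\supp f_t \subseteq B_R \subseteq \R^d \times \R^d$ for all $t \in [0,T]$, as allowed by Theorem \ref{thm:Existence}. Write $\calT_t := \calT_{E[f]}^t$. Since $U \in \mathcal{C}^2$, the field $E[f] = -\nabla U \ast \rho[f]$ is $\mathcal{C}^1$ in $x$, so the right-hand side $\Psi_{E[f]}$ of the characteristic system \eqref{eq:characteristics} is $\mathcal{C}^1$ in $(x,v)$. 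Classical ODE theory then tells us that $\calT_t$ is $\mathcal{C}^1$ and injective on $\R^{2d}$ (uniqueness), so it is a $\mathcal{C}^1$ diffeomorphism onto its image with $\mathcal{C}^1$ inverse $\mathcal{S}_t := \calT_t^{-1}$ defined on $\calT_t(\R^{2d}) \supseteq \supp f_t$. The change-of-variables formula for pushforwards of densities gives
\begin{equation*}
  f_t(y) = f_0(\mathcal{S}_t(y))\, \bigl|\det D \mathcal{S}_t(y)\bigr|
  \qquad \text{for } y \in \supp f_t,
\end{equation*}
and $f_t(y) = 0$ elsewhere.

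For the first factor, Lemma \ref{lem:reg-chars} applied to the backward flow (which satisfies an ODE with the same regularity) shows that $\mathcal{S}_t$ is Lipschitz on a neighborhood of $\supp f_t$, with constant depending on $\Lip_R(E[f]) \leq \Lip_{2R}(\nabla U) < \infty$ — finite because $U \in \mathcal{C}^2$. Composition with the Lipschitz function $f_0$ is then Lipschitz. For the Jacobian factor I would appeal to Liouville's formula
\begin{equation*}
  \det D\calT_t(P_0) = \exp\!\left(\int_0^t \operatorname{tr} D_{(x,v)} \Psi_{E[f]}(s, \calT_s(P_0))\,ds\right),
\end{equation*}
together with the key observation that a direct computation on $\Psi_{E[f]}(s, X, V) = \bigl(V,\, E[f](s,X) + (\alpha - \beta |V|^2)V\bigr)$ produces
\begin{equation*}
  \operatorname{tr} D_{(x,v)} \Psi_{E[f]}(s, X, V) = \alpha d - \beta(d+2)|V|^2,
\end{equation*}
\emph{independent of $D_x E[f]$}. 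Thus only the velocity component of the characteristic enters the integrand, and by Lemma \ref{lem:reg-chars} this is Lipschitz in $P_0$ on any fixed ball. Composing with the smooth function $\exp$, we see that $\det D\calT_t$ is Lipschitz and bounded below away from zero on a compact neighborhood of the support, and hence $\det D\mathcal{S}_t(y) = 1/\det D\calT_t(\mathcal{S}_t(y))$ is Lipschitz on a neighborhood of $\supp f_t$ as well.

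Combining, $f_t$ is the product of two bounded Lipschitz functions on a neighborhood of $\supp f_t$; since $f_0$ must vanish at the boundary of its support by continuity (it is Lipschitz and compactly supported), the same holds for $f_t$ at the boundary of its own support, and extension by zero produces a function Lipschitz on all of $\R^{2d}$. The main subtlety — and the reason the argument works at all under the stated regularity — is the Jacobian estimate: one might naively expect to need $U \in \mathcal{C}^3$ so that $\calT_t$ is $\mathcal{C}^2$ and $D\calT_t$ Lipschitz, but the structure of the swarming equation saves us, since the phase-space divergence of $\Psi_E$ involves only $V$ and not any spatial derivative of $E$, so that $\mathcal{C}^2$-regularity of $U$ is enough.
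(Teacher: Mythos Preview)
Your proof is correct and is, in essence, a fully worked-out version of what the paper merely gestures at. The paper's own argument is three sentences long: it observes that the vector field $a(t,x,v) = (v,\, E(t,x) + (\alpha - \beta|v|^2)v)$ is bounded and Lipschitz on the relevant compact set, and then invokes unnamed ``classical results'' for linear transport equations to conclude. Your route through the change-of-variables formula and Liouville's identity is exactly how one would justify such a classical result, so the two arguments are morally the same.

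Where your write-up genuinely adds value is in the Jacobian step. For a generic Lipschitz (or even $\mathcal{C}^1$) field $a$, Lipschitz continuity of $\det D\calT_t$ is \emph{not} automatic --- one would typically want $a \in \mathcal{C}^{1,1}$, i.e.\ $U \in \mathcal{C}^3$. Your computation that $\operatorname{tr} D_{(x,v)} \Psi_E = \alpha d - (d+2)\beta |V|^2$ is independent of $E$ (and smooth in $V$) is precisely the structural reason why $U \in \mathcal{C}^2$ suffices here, and the paper's sketch does not make this explicit. So your proof is not just a repackaging: it isolates the one nontrivial point the paper leaves implicit.
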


\begin{proof}
Solutions obtained from Theorem \ref{thm:Existence} have bounded
support in velocity for all times $t > 0$, and their fields $E
\equiv E(t,x) := -\nabla U * \rho$ are Lipschitz with respect to
$x$. Hence, one can rewrite
  eq. \eqref{eq:swarming} as a general equation of the form
  \begin{equation*}
    \partial_t f + \dv (a f) = 0,
  \end{equation*}
  where $a = a(t,x,v)$ is the expression appearing in the equation,
  \begin{equation*}
    a(t,x,v) = (v, E(t,x) + (\alpha - \beta\abs{v}^2) v).
  \end{equation*}
  Then, $a$ is bounded and Lipschitz with respect to $x,v$ on the
  domain considered as the support in velocity is bounded, and
  classical results show that $f_t$ is Lipschitz for all $t \geq 0$.
\end{proof}

\section{Well-posedness for General Models}

In this section we want to show that the same results we have
obtained in the previous section are also valid, with suitable
modifications, for much more general models than
\eqref{eq:swarming}. We will start by showing the adaptation of
the strategy for the Cucker-Smale system and then, we will extend
this strategy to more general models.

\subsection{Cucker-Smale Model}
\label{sec:cucker_smale}

We will prove well-posedness in a slightly more general setting
than that of the Cucker-Smale model in section \ref{sec:bg}, being
less restrictive on the communication rate and the velocity
averaging. To be more precise, we shall consider $\xi[f](x,v,t) =
\left[ (H(x,v)) \ast f \right](x,v,t)$ as in \eqref{eq:CS}, but
for a general $H:\R^d\times\R^d\to\R^d$, for which we only assume
the following hypotheses:
\begin{hyp}[Conditions on $H$]
  \label{hyp:H-conditions}
  \begin{enumerate}
  \item $H$ is locally Lipschitz.
  \item For some $C > 0$,
    \begin{equation}\label{H3}
      |H(x,v)|\leq C(1+|x|+|v|)
      \quad \text{ for all } x,v \in \R^d.
    \end{equation}
  \end{enumerate}
\end{hyp}

Since the procedure to prove the well-posedness results to
\eqref{eq:CS} is the same we have already applied in the previous
section, we will state some of the results without proof. First of
all, fix $T > 0$ and let us introduce the system of ODE's solved by
the characteristics of \eqref{eq:CS}:
\begin{subequations}
  \label{eq:characteristicsCS}
  \begin{align}
    \label{eq:characteristicsCSX}
    \frac{d}{dt} X &= V,
    \\
    \label{eq:characteristicsCSV}
    \frac{d}{dt} V &= -\xi(t,X,V),
  \end{align}
\end{subequations}
where $\xi: [0,T] \times \R^d \times \R^d \to \R^d$ is any function
satisfying the following hypothesis:

\begin{hyp}[Conditions on $\xi$]
  \label{hyp:xi-conditions}
  \begin{enumerate}
  \item $\xi$ is continuous on $[0,T] \times \R^d \times \R^d$,
  \item For some $C > 0$,
    \begin{equation}
      \label{eq:xi-growth}
      \abs{\xi(t,x,v)} \leq C(1 + \abs{x} + \abs{v}),
      \quad \text{ for all } t,x,v \in [0,T] \times \R^d \times \R^v, and
    \end{equation}
  \item $\xi$ is \emph{locally Lipschitz with respect to $x$ and $v$}, i.e., for
    any compact set $K \subseteq \R^d\times\R^d$ there is some $L_K > 0$ such
    that
    \begin{equation}
      \label{eq:Lipschitz-resp-x2}
      \abs{\xi(t,P_1) - \xi(t,P_2)} \leq L_K \abs{P_1-P_2},
      \qquad t \in [0,T], \quad P_1,P_2 \in K.
    \end{equation}
  \end{enumerate}
\end{hyp}
Under these conditions, we may consider the flow map $P_\xi^t =
P_\xi^t(x,v)$ associated to \eqref{eq:characteristicsCS}, defined as
the solution to the system \eqref{eq:characteristicsCS} with initial
condition $(x,v)$. For ease of notation, we will write the system
\eqref{eq:characteristicsCS} as
$$
\frac{dP_\xi^t}{d t}=\Psi_{\xi}(t,P_\xi^t).
$$

\begin{rem}\label{rem:suplip}
  Under Hypothesis \ref{hyp:H-conditions} on $H$, note that whenever $\tf\in
  C([0,T],\Po(\R^d\times\R^d))$ is a given compactly supported measure
  with $\supp(\tf_t)\subset B_{R^x}\times B_{R^v}$ for all $t\in
  [0,T]$, the field $\xi[\tf]=H\ast\tf$ satisfies Hypothesis
  \ref{hyp:xi-conditions}.
\end{rem}

\begin{dfn}[Notion of Solution]
  \label{dfn:solution2}
  Take $H$ satisfying Hypothesis \ref{hyp:H-conditions}, a measure
  $f_0 \in \P_1(\R^d \times \R^d)$, and $T \in (0,\infty]$.  We say
  that a function $f:[0,T] \to \P_1(\R^d \times \R^d)$ is a solution
  of the swarming equation \ref{eq:CS} with initial condition
  $f_0$ when:
  \begin{enumerate}
  \item The field $\xi = H*f$ satisfies Hypothesis \ref{hyp:xi-conditions}. %is continuous and $(x,v)$-Lipschitz on $[0,T]\times \R^d\times\R^d$.
  \item It holds $f_t = P_\xi^t \# f_0$.
  \end{enumerate}
\end{dfn}

Now, an analogue to Lemma \ref{lem:reg-chareqs} can be stated. We
shall state this one and the following lemmas for a general $\xi$
satisfying Hypothesis \eqref{hyp:xi-conditions}.
% We will denote by
% $L_\xi(R)$ the Lipschitz constant of $\xi$ with respect to $(x,v)$ on
% $B_R \times B_R$, and by $L_H(R)$ the Lipschitz constant of $H$ on
% $B_R \times B_R$.
%
\begin{lem}[Regularity of the characteristic equations]
  \label{lem:reg-chareqsCS}
  Take $T>0$,
  $\xi$ satisfying Hypothesis \eqref{hyp:xi-conditions}, $R>0$ and
  $t\in [0,T]$. Then there exist constants $C$ and $L_p$ depending on
  $\Lip_R(\xi)$ and $T$ such that
  $$|\Psi_{\xi}(P)|\leq C
  \quad \text{ for all } P \in B_R \times B_R$$
  and
  $$
  |\Psi_{\xi}(P_1)-\Psi_{\xi}(P_2)|\leq L_p|P_1-P_2|
  \quad \text{ for all } P_1, P_2 \in B_R \times B_R.
  $$
\end{lem}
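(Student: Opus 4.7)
The plan is to mimic Lemma \ref{lem:reg-chareqs} almost verbatim, since by \eqref{eq:characteristicsCS} the vector field is simply $\Psi_\xi(t,P) = (V, -\xi(t,X,V))$ when $P = (X,V)$. In fact the Cucker--Smale case is slightly easier, as no nonlinear velocity term like $\beta|V|^2 V$ is present; all the work is already done by the two structural assumptions in Hypothesis \ref{hyp:xi-conditions}.

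For the uniform bound I would fix $P = (X,V) \in B_R \times B_R$, observe that $|V| \le R$, and apply the growth condition \eqref{eq:xi-growth} to get
$$
|\xi(t,X,V)| \le C(1+|X|+|V|) \le C(1+2R)
$$
uniformly on $[0,T]$. Combining componentwise yields $|\Psi_\xi(t,P)| \le R + C(1+2R)$, which is the first estimate. For the Lipschitz bound I would fix $P_i = (X_i,V_i) \in B_R \times B_R$, $i=1,2$, and split
$$
|\Psi_\xi(t,P_1)-\Psi_\xi(t,P_2)| \le |V_1-V_2| + |\xi(t,X_1,V_1)-\xi(t,X_2,V_2)|.
$$
The first summand is trivially dominated by $|P_1-P_2|$; the second is controlled by applying item (3) of Hypothesis \ref{hyp:xi-conditions} with the compact set $K = B_R \times B_R \subseteq \R^d \times \R^d$, which gives $\Lip_R(\xi)\,|P_1-P_2|$. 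Adding, we obtain the claimed estimate with $L_p = 1 + \Lip_R(\xi)$.

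There is no real obstacle here: both bounds follow immediately from the hypotheses on $\xi$ plus the trivial fact that the position component of $\Psi_\xi$ is just $V$. The only care needed is to use the hypotheses uniformly in $t \in [0,T]$, which is already built into Hypothesis \ref{hyp:xi-conditions}; this is the only way $T$ enters the constants, via the implicit $T$-dependence of $\Lip_R(\xi)$ (and of the growth constant appearing in \eqref{eq:xi-growth}).
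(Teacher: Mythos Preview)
Your proposal is correct and follows exactly the approach the paper intends: the paper does not give a separate proof of this lemma, merely presenting it as the analogue of Lemma \ref{lem:reg-chareqs} (whose proof is itself just ``a direct calculation''). Your explicit computation of the bound and Lipschitz constant from the structure $\Psi_\xi(t,P)=(V,-\xi(t,X,V))$ and Hypothesis \ref{hyp:xi-conditions} is precisely that direct calculation written out in full.
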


Lemmas \ref{lem:dep-chareqs-E}--\ref{lem:reg-chars-time} are valid
as they are presented, taking $\xi$ and Hypothesis
\ref{hyp:xi-conditions} to play the role of $E$ and Hypothesis
\ref{hyp:E-conditions}, and making the obvious minor modifications
on the dependence of the constants. Now we can look at the
existence of solutions:

\begin{thm}[Existence and uniqueness of measure solutions]
  \label{thm:Existence-CS}
  Assume $H$ satisfies Hypothesis \ref{hyp:H-conditions}, and take
  $f_0\in \Po(\R^d\times\R^d)$ compactly supported. Then there exists
  a unique solution $f\in C([0,T],\Po(\R^d\times\R^d))$ to equation
  \eqref{eq:CS} in the sense of Definition \ref{dfn:solution2} with
  initial condition $f_0$. Moreover, the solution remains compactly
  supported for all $t\in [0,T]$, i.e., there exist $R^x$ and $R^v$
  depending on $T$, $H$ and the support of $f_0$, such that
  $$\supp(f_t)\subset B_{R^x}\times B_{R^v}
  \text{ for all } t\in [0,T].$$
\end{thm}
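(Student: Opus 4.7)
The plan is to mirror the fixed-point argument used for Theorem \ref{thm:Existence}, with $\xi[f]=H\ast f$ in the role of $E[f]=-\nabla U \ast \rho$. Fix $T>0$ and choose $R^0>0$ with $\supp f_0\subseteq B_{R^0}\subseteq \R^d\times\R^d$. Using the linear growth bound \eqref{H3}, the characteristic ode's with any admissible $\xi$ satisfy a bound of the type $|(X(t),V(t))|\leq |(X_0,V_0)|\,e^{Ct}$ (the argument of Lemma \ref{lem:exist_cont_charact} carries over verbatim since $H$ is linearly bounded jointly in $x,v$). Consequently, for $T$ small enough (depending on $R^0$ and the growth constant of $H$), the set
\[
\mathcal{F}:=\bigl\{f\in C([0,T],\Po(\R^d\times\R^d))\;:\;\supp f_t\subseteq B_{R}\ \forall t\in[0,T]\bigr\},
\qquad R:=2R^0,
\]
equipped with $\mathcal{W}_1(f,g):=\sup_{t\in[0,T]}W_1(f_t,g_t)$, is a complete metric space on which the operator
\[
\Gamma[f](t):=P_{\xi[f]}^t\#f_0
\]
is well defined. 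Indeed, by Remark \ref{rem:suplip}, $\xi[f]$ satisfies Hypothesis \ref{hyp:xi-conditions} whenever $f\in\mathcal{F}$, so the flow $P_{\xi[f]}^t$ exists, and the $W_1$-time continuity of $\Gamma[f]$ follows from the analogue of Lemma \ref{lem:time-W1-continuity} (stated as valid in the paper after Lemma \ref{lem:reg-chareqsCS}).

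Next I would establish the two field estimates, both immediate consequences of Hypothesis \ref{hyp:H-conditions} and the definition $\xi[f]=H\ast f$: for $f,g\in\mathcal{F}$ and the ball $B_R$,
\[
\|\xi[f]\|_{L^\infty([0,T]\times B_R)}\leq C(1+R),\qquad
\Lip_R(\xi[f])\leq \Lip_{2R}(H),
\]
and the Kantorovich-type bound, proved exactly as in Lemma \ref{lem:field-W1-Linfty} using an optimal transference plan,
\[
\|\xi[f_t]-\xi[g_t]\|_{L^\infty(B_R)}\leq \Lip_{2R}(H)\,W_1(f_t,g_t).
\]
Combining these with the analogues of Lemmas \ref{lem:same-f0} and \ref{lem:dep-chars-E} (both stated as valid in this section with $\xi$ in place of $E$) yields, for $t\in[0,T]$,
\[
W_1(\Gamma[f](t),\Gamma[g](t))
\leq \|P_{\xi[f]}^t-P_{\xi[g]}^t\|_{L^\infty(\supp f_0)}
\leq \frac{e^{C_1 t}-1}{C_1}\,\Lip_{2R}(H)\,\mathcal{W}_1(f,g),
\]
with $C_1$ depending only on $R$ and $\Lip_{2R}(H)$. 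Taking the supremum over $t$ and shrinking $T$ further so that $\frac{e^{C_1 T}-1}{C_1}\Lip_{2R}(H)<1$ makes $\Gamma$ a contraction on $\mathcal{F}$. The unique fixed point is the desired local-in-time solution in the sense of Definition \ref{dfn:solution2}.

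For the global statement, I would iterate this construction. Since mass is conserved by push-forward, and because the a priori bound $|(X(t),V(t))|\leq |(X_0,V_0)|e^{Ct}$ prevents finite-time blow-up of the support, one can restart the fixed-point argument on successive intervals and patch the solutions; the cumulative support bound gives constants $R^x=R^x(T)$ and $R^v=R^v(T)$ with $\supp f_t\subseteq B_{R^x}\times B_{R^v}$ for all $t\in[0,T]$. Uniqueness on any $[0,T]$ follows by restricting any two $C([0,T],\Po)$ solutions with compact support to a common ball and applying the contraction estimate together with Gronwall, as in Theorem \ref{thm:stability}. The main technical point to watch, and the only real departure from the swarming case, is that unlike the self-propulsion term $(\alpha-\beta|v|^2)v$, the field $\xi[f]$ has no damping mechanism in $v$; thus support control requires the joint linear growth bound \eqref{H3} in both $x$ and $v$, producing exponential-in-$t$ support bounds rather than bounded ones, which however suffices for global existence on every finite interval.
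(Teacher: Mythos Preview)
Your proposal is correct and follows essentially the same route as the paper: the paper does not write out the proof in detail but simply states that the steps of Theorem~\ref{thm:Existence} carry over, with the field estimates and the Kantorovich-type bound you state packaged together as Lemma~\ref{lem:lipschitz-W1-Linfty-field-CS}. Your closing remark about the lack of velocity damping and the resulting reliance on the joint linear growth bound~\eqref{H3} for support control is accurate and is implicitly how the paper handles the global extension as well.
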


The proof of this result can be done following the same steps as for
proving Theorem \ref{thm:Existence}. Lemmas \ref{lem:same-f0} to
\ref{lem:same-T_[f]} still hold in this situation, and we recombine
Lemmas \ref{lem:lipschitz-field} and \ref{lem:field-W1-Linfty} in the
following result:

\begin{lem}
  \label{lem:lipschitz-W1-Linfty-field-CS}
  Take $H$ satisfying Hypothesis \ref{hyp:H-conditions}, $\tf\in
  \Po(\R^d\times\R^d)$ with $\supp(\tf_t)\subset B_{R^x}\times
  B_{R^v}$, and $\xi := \xi[\tf] = H * \tf$. Then, for any
  $R>0$ $$\Lip_R(\xi) \leq \Lip_{R+\hat{R}}(H),$$ with
  $\hat{R}:=\max{R^x,R^v}$. Furthermore, if $\tilde g \in
  \Po(\R^d\times\R^d)$ it holds that
  \begin{equation}
    \label{eq:field-W1-Linfty-cs}
    \norm{\xi[\tf] - \xi[\tilde g]}_{\rmL^{\infty}(B_R)}
    \leq \Lip_{R+\hat{R}}(H)\, W_1(\tf,\tilde g).
  \end{equation}
\end{lem}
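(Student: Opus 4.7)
The plan is to exploit the convolution structure $\xi[\tf](x,v) = \int H(x-y,v-w)\,d\tf(y,w)$ so that both estimates reduce to the local Lipschitz property of $H$, integrated either against the probability measure $\tf$ (for the first estimate) or against an optimal transference plan between $\tf$ and $\tilde g$ (for the second). Throughout, I will use the basic bookkeeping that, for $P \in B_R \subseteq \R^{2d}$ and $Q$ in the product of balls $B_{R^x} \times B_{R^v}$, the point $P - Q$ lies in $B_{R + \hat R}$, where $\hat R = \max\{R^x, R^v\}$ (absorbing any constant coming from comparing the product-space norm with the Euclidean norm on $\R^{2d}$).

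First, for the Lipschitz estimate, I would fix $P_1, P_2 \in B_R$ and write
\[
\xi[\tf](P_1) - \xi[\tf](P_2) = \int_{\R^{2d}} \bigl[H(P_1 - Q) - H(P_2 - Q)\bigr]\, d\tf(Q).
\]
For $Q \in \supp \tf$, both $P_1 - Q$ and $P_2 - Q$ lie in $B_{R+\hat R}$, so the local Lipschitz bound on $H$ gives $\abs{H(P_1-Q) - H(P_2-Q)} \leq \Lip_{R+\hat R}(H)\,\abs{P_1 - P_2}$. Integrating against the probability measure $\tf$ (which has unit mass) immediately yields $\abs{\xi[\tf](P_1) - \xi[\tf](P_2)} \leq \Lip_{R+\hat R}(H)\,\abs{P_1 - P_2}$, which is the first claim.

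For the stability estimate, I would take $\pi$ an optimal transference plan for $W_1(\tf, \tilde g)$, and use that $\pi$ has marginals $\tf$ and $\tilde g$ to rewrite, for any $P \in B_R$,
\[
\xi[\tf](P) - \xi[\tilde g](P) = \int_{\R^{2d} \times \R^{2d}} \bigl[H(P - Q_1) - H(P - Q_2)\bigr]\, d\pi(Q_1, Q_2).
\]
On $\supp \pi \subseteq (B_{R^x} \times B_{R^v})^2$, both $Q_1$ and $Q_2$ lie in $B_{\hat R}$, so $P - Q_i \in B_{R + \hat R}$, and the local Lipschitz bound on $H$ combined with the Kantorovich identity \eqref{wpdef} gives
\[
\abs{\xi[\tf](P) - \xi[\tilde g](P)} \leq \Lip_{R+\hat R}(H) \int \abs{Q_1 - Q_2}\, d\pi(Q_1, Q_2) = \Lip_{R+\hat R}(H)\, W_1(\tf, \tilde g).
\]
Taking the supremum over $P \in B_R$ finishes the proof. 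The only mildly delicate point is the radius bookkeeping inside $\R^{2d}$; beyond that, the argument is a direct transport-plan calculation in the spirit of Lemma \ref{lem:field-W1-Linfty}, and no fixed-point, compactness or Gronwall input is needed at this stage.
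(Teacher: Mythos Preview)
Your proof is correct and follows essentially the same route as the paper: the first part is argued there simply as ``follows directly from the properties of convolution'' (your version just makes this explicit), and for the second part the paper likewise takes an optimal plan $\pi$, rewrites $\xi[\tf]-\xi[\tilde g]$ as an integral of $H(P-Q_1)-H(P-Q_2)$ against $\pi$, and applies the local Lipschitz bound on $H$ over $B_{R+\hat R}$. The only point worth flagging is that, as stated, the lemma does not impose a support constraint on $\tilde g$, yet both your argument and the paper's silently use $\supp\pi\subset B_{\hat R}\times B_{\hat R}$; this is harmless in the applications but is an implicit assumption you are sharing with the original.
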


\begin{proof}
  The first part follows directly from the properties of
  convolution. For the second one, take $\pi$ to be an optimal
  transportation plan between the measures $\tf$ and $\tilde g$. Then,
  for any $x,v \in B_R$, using that $\pi$ has marginals $\tf$ and
  $\tilde g$,
  \begin{align*}
    \xi[\tf](x,v) &\,- \xi[\tilde g](x,v)\\
    \,=&
    \int_{\R^{2d}} \!\! H(x-y,v-u)\tf(y,u) \,d (y,u)\\
    &- \int_{\R^{2d}} \!\!H(x-z,v-w)\tilde g(z,w) \,d (z,w)
    \\
    \,=&
    \int_{\R^{4d}} \left[H(x-y,v-u) - H(x-z,v-w)\right]
    \,d \pi(y,u,z,w).
  \end{align*}
  Taking absolute value, and using that the support of $\pi$ is
  contained in the ball $B_{\hat R} \subseteq \R^{4d}$,
  \begin{multline*}
    |\xi[\tf](x,v) - \xi[\tilde g](x,\,v)|
    \\
    \leq
    \int_{B_{\hat R}} \!\! |H(x-y,v-u) - H(x-z,v-w)| \,d \pi(y,u,z,w)
    \\
    \leq \,
    \Lip_{R+\hat{R}}(H)
    \int_{\R^{4n}} \!\! \abs{(y - z, u-w)} \,d \pi(y,u,z,w)
    =
    \Lip_{R+\hat{R}}(H) W_1(\tf,\tilde g).
  \end{multline*}
\end{proof}

Finally, a stability result also follows using the same steps as
in Theorem \ref{thm:stability}.

\begin{thm}[Stability in $W_1$]
  \label{thm:stability-CS} Assume $H$ satisfies Hypothesis
  \ref{hyp:H-conditions}, and $f_0,g_0\in \Po(\R^d\times\R^d)$ are
  compactly supported. Consider the solutions $f,g$ to
  eq. \eqref{eq:CS} given by Theorem \ref{thm:Existence-CS} with
  initial data $f_0$ and $g_0$, respectively. Then, there exists a
  strictly increasing function $r(t):[0,\infty)\longrightarrow \R^+_0$
  with $r(0)=1$ depending only on $H$ and the size of the support of
  $f_0$ and $g_0$, such that
  \begin{equation}
    \label{eq:stability2}
    W_1(f_t, g_t)
    \leq
    r(t)\, W_1(f_0, g_0),
    \quad
    t \geq 0.
  \end{equation}
\end{thm}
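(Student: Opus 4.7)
The plan is to mirror almost verbatim the argument used for Theorem \ref{thm:stability}, substituting $\xi$ for $E$ and invoking the Cucker-Smale analogues of the earlier lemmas. First I would fix $T > 0$ and use Theorem \ref{thm:Existence-CS} to find $R^x, R^v > 0$ with $\supp(f_t) \cup \supp(g_t) \subseteq B_{R^x} \times B_{R^v}$ for $t \in [0,T]$; setting $R = \max\{R^x, R^v\}$ and $\hat R$ as in Lemma \ref{lem:lipschitz-W1-Linfty-field-CS}, both supports lie in $B_R \times B_R$. By Remark \ref{rem:suplip} the fields $\xi[f]$ and $\xi[g]$ satisfy Hypothesis \ref{hyp:xi-conditions}, so the flow maps $P_{\xi[f]}^t$ and $P_{\xi[g]}^t$ are well defined and the representations $f_t = P_{\xi[f]}^t \# f_0$, $g_t = P_{\xi[g]}^t \# g_0$ hold.

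Second I would split, using the triangle inequality for $W_1$,
\begin{equation*}
W_1(f_t, g_t) \leq W_1(P_{\xi[f]}^t \# f_0,\, P_{\xi[g]}^t \# f_0) + W_1(P_{\xi[g]}^t \# f_0,\, P_{\xi[g]}^t \# g_0).
\end{equation*}
For the first summand, Lemma \ref{lem:same-f0} bounds it by $\|P_{\xi[f]}^t - P_{\xi[g]}^t\|_{L^\infty(\supp f_0)}$. The $\xi$-analogue of Lemma \ref{lem:dep-chars-E} then controls this by $C_2 \int_0^t e^{C_2(t-s)} \|\xi[f_s] - \xi[g_s]\|_{L^\infty(B_R)} \, ds$, and Lemma \ref{lem:lipschitz-W1-Linfty-field-CS} converts the integrand into $\Lip_{R+\hat R}(H)\, W_1(f_s, g_s)$. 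For the second summand, Lemma \ref{lem:same-T_[f]} gives a factor $\Lip_R(P_{\xi[g]}^t) \, W_1(f_0, g_0)$, and the $\xi$-analogue of Lemma \ref{lem:reg-chars} together with the bound $\Lip_R(\xi[g]) \leq \Lip_{R+\hat R}(H)$ from Lemma \ref{lem:lipschitz-W1-Linfty-field-CS} estimates this Lipschitz constant by $e^{C_1 t}$ for a constant $C_1$ depending only on $T$, $H$ and the initial supports.

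Third I would collect these bounds into the single inequality
\begin{equation*}
W_1(f_t, g_t) \leq e^{C_1 t} W_1(f_0, g_0) + C_2 \Lip_{R+\hat R}(H) \int_0^t e^{C_2(t-s)} W_1(f_s, g_s) \, ds,
\end{equation*}
set $C = \max\{C_1, C_2, C_2 \Lip_{R+\hat R}(H)\}$, multiply by $e^{-Ct}$, and apply Gronwall's lemma exactly as in the proof of Theorem \ref{thm:stability}. This yields $W_1(f_t, g_t) \leq r(t)\, W_1(f_0, g_0)$ on $[0,T]$ with $r$ a double exponential in $t$; since $T$ was arbitrary and the constants depend monotonically on $T$ (through $R$, $\hat R$, and hence $\Lip_{R+\hat R}(H)$), one obtains a single strictly increasing rate $r$ on all of $[0,\infty)$ depending only on $H$ and the supports of $f_0$ and $g_0$, with $r(0)=1$.

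The main obstacle here is not conceptual but organizational: one must verify that each constant $C_1, C_2, \Lip_{R+\hat R}(H)$ genuinely depends only on $T$, $H$ and the data $\supp f_0, \supp g_0$, which requires tracking how $R$ (and thus $\hat R$) is produced by Theorem \ref{thm:Existence-CS} from the linear growth bound \eqref{H3}. Since that growth bound gives a Gronwall estimate on the characteristics analogous to \eqref{eq:chars-bounded}, this dependence is explicit and the bookkeeping goes through.
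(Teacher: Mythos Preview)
Your proposal is correct and follows exactly the approach the paper intends: the paper itself does not give a separate proof but simply states that the result ``follows using the same steps as in Theorem~\ref{thm:stability},'' and your argument is precisely that transcription, with $\xi$ in place of $E$ and Lemma~\ref{lem:lipschitz-W1-Linfty-field-CS} replacing Lemmas~\ref{lem:lipschitz-field} and~\ref{lem:field-W1-Linfty}.
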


\begin{remark}[Evolution of the support in the Cucker-Smale model]
  In \cite{cfrt09} it is shown a sharp bound on the evolution of the
  support for the kinetic Cucker-Smale equation in which
  $H(x,v)=w(x)v$. More precisely, it is proved that for any given $f_0
  \in \Po(\R^d\times \R^d)$ compactly supported, we have that
  $$
  \supp(f_t) \subset B(x_c(0)+m t,R^x(t)) \times B(m,R^v(t)),
  $$
  with
  $$
  R^x(t) \leq \bar R \qquad \mbox{and} \qquad R^v(t) \leq R_0\,
  e^{-\lambda t}
  $$
  for some $\bar R$ depending only on
  $R_0=\max\{R^x(0),R^v(0)\}$ and $\lambda=w(2\bar R)$. Here, $m$
  stands for the mean velocity of the system
$$
m:=\int_{\R^{2d}} v\, f(t,x,v)\,d x \,d v,
$$
which is preserved along its evolution. This precise bound on the
support and the particular choice of $H$ lead to a uniform control in
time of the constants $\Lip_R(H)$ and $L_p$ in the results above,
which are now bounded for all times. A tedious but straightforward
computation leads to a rate $r(t)$ in the stability result which is
exponentially increasing. Indeed, if we follow the steps of the proof
of Theorem \ref{thm:stability} for the particular case of the
Cucker-Smale model we can see that, since $R_x$ and $R_v$ are not
increasing with time, the numbers $C_1$ and $C_2$ that appear there
can be chosen independently of time, whence $r(t)$ shall grow at most
exponentially.
\end{remark}

\begin{remark}[Comparison with Literature]
As already mentioned above, the particular case of the kinetic
Cucker-Smale model has already been approached in \cite{HL08}
where the authors give a well-posedness result based on the
bounded Lipschitz distance. Here, we recover the same result but
based on the stability in the Wasserstein distance $W_1$, which
allows us to obtain sharper constants and rates.
\end{remark}

\subsection{General Models}
\label{sec:general_models}

With the techniques used in the previous sections one can include
quite general kinetic models in the well-posedness theory. In this
section we illustrate this by giving a result for a model which
includes both the potential interaction and self-propulsion
effects of section \ref{sec:swarming_model}, the
velocity-averaging effect of section \ref{sec:cucker_smale} and
the more general models above \cite{LLE,LLE2}.

Let us introduce some notation for this section: $\Pc(\R^d\times\R^d)$
denotes the subset of $\Po(\R^d\times\R^d)$ consisting of measures of
compact support in $\R^d\times\R^d$, and we consider the non-complete
metric space ${\cal A}:=\mathcal{C}([0,T], \Pc(\R^d\times\R^d))$
endowed with the distance ${\cal W}_1$. On the other hand, we consider
the set of functions ${\cal B}:=\mathcal{C}([0,T],\lip_{loc}
(\R^d\times\R^d,\R^d))$, which in particular are locally Lipschitz
with respect to $(x,v)$, uniformly in time. We consider an operator
${\cal H}[\cdot]:{\cal A}\longrightarrow {\cal B}$ and assume the
following:
\begin{hyp}[Hypothesis on a general operator]
  \label{hyp:general-model}
  Take any $R_0 > 0$ and $f, g \in \mathcal{A}$ such that $\supp(f_t)
  \cup \supp(g_t) \subseteq B_{R_0}$ for all $t \in [0,T]$. Then for
  any ball $B_R \subset \R^d \times \R^d$, there exists a constant $C
  = C(R,R_0)$ such that
  \begin{gather*}
    \max_{t\in [0,T]}
    \|{\cal H}[f]-{\cal H}[g]\|_{L^\infty(B_R)}
    \leq
    C \,{\cal W}_1(f,g),
    \\
    \max_{t \in [0,T]} \lip_R({\cal H}[f])
    \leq C .
  \end{gather*}
\end{hyp}

Associated to this operator, we can consider the following general
equation:
\begin{equation}
  \label{eq:general-model}
  \partial_t f + v \cdot \grad_x f
  - \nabla_v \cdot [ {\cal H}[f] f ]
  = 0.
\end{equation}

\begin{remark}[Generalization]
  It is not difficult to see that the choices ${\cal
    H}[f]=(\alpha-\beta|v|^2)v-\nabla U * \rho$ and ${\cal H}[f]=H *
  f$ correspond to \eqref{eq:swarming} and \eqref{eq:CS},
  respectively, and that they satisfy Hypothesis
  \ref{hyp:general-model} if we assume the hypotheses of Theorems
  \ref{thm:Existence} and \ref{thm:Existence-CS} respectively.
  Moreover, one can cook up an operator of the form:
  $$
  {\cal H}[f]=F_A(x,v)+G(x) * \rho + H(x,v) * f
  $$
  with $F_A$, $G$ and $H$ given functions satisfying suitable
  hypotheses, such that the kinetic equation \eqref{eq:general-model}
  corresponds to the model \eqref{eq:lle}.
\end{remark}

We will additionally require the following:
\begin{hyp}[Additional constraint on {\cal H}]
  \label{hyp:general-model2}
  Given $f \in \mathcal{C}([0,T], \Pc(B_{R_0}))$, and for any
  initial condition $(X^0, V^0) \in \R^d \times \R^d$, the following
  system of ordinary differential equations has a globally defined
  solution:
  \begin{subequations}
  \label{eq:characteristics-general}
  \begin{align}
    \label{eq:characteristicsX-general}
    \frac{d}{dt} X &= V,
    \\
    \label{eq:characteristicsV-general}
    \frac{d}{dt} V &= {\cal H}[f](t,X,V) ,
    \\
    X(0) &= X^0, \quad V(0) = V^0.
  \end{align}
\end{subequations}
\end{hyp}
Of course, this is a requirement that has to be checked for every
particular model, and it is difficult to give useful properties of
${\cal H}$ that imply this and are general enough to encompass a
range of utile models; therefore, we prefer to give a general
condition which reduces the problem of existence and stability to
the simpler one of existence of the characteristics.

In the above conditions one can follow a completely analogous argument
to that in the proof of Theorems \ref{thm:Existence} and
\ref{thm:stability}, and obtain the following result:

\begin{thm}[Existence, uniqueness and stability of measure solutions for a
  general model]
  \label{thm:Existence-general}
  Take an operator ${\cal H}[\cdot]:{\cal A}\longrightarrow {\cal B}$
  satisfying Hypotheses \ref{hyp:general-model} and
  \ref{hyp:general-model2}, and $f_0$ a measure on $\R^d \times \R^d$
  with compact support. There exists a solution $f$ on $[0,+\infty)$
  to equation \eqref{eq:general-model} with initial condition
  $f_0$. In addition,
  \begin{equation}
    \label{eq:f-continuous-general}
    f \in \mathcal{C}([0,+\infty); \Pc(\R^d\times \R^d))
  \end{equation}
  and there is some increasing function $R = R(T)$ such that for all $T>0$,
  \begin{equation}
    \label{eq:supp-f-general}
    \supp f_t \subseteq B_{R(T)} \subseteq \R^d \times \R^d
    \quad \text{ for all } t \in [0,T].
  \end{equation}
  This solution is unique among the family of solutions satisfying
  \eqref{eq:f-continuous-general} and \eqref{eq:supp-f-general}.

  Moreover, given any other initial data $g_0\in \Pc(\R^d\times\R^d)$ and $g$
  its corresponding solution, then there exists a
  strictly increasing function $r(t):[0,\infty)\longrightarrow \R^+_0$
  with $r(0)=1$ depending only on ${\cal H}$ and the size of the support of
  $f_0$ and $g_0$, such that
  \begin{equation*}
    W_1(f_t, g_t)
    \leq
    r(t)\, W_1(f_0, g_0),
    \quad
    t \geq 0.
  \end{equation*}
\end{thm}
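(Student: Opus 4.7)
The plan is to transpose the two-step strategy of Theorems \ref{thm:Existence} and \ref{thm:stability} to this abstract setting. Hypothesis \ref{hyp:general-model} plays the role of Lemmas \ref{lem:lipschitz-field} and \ref{lem:field-W1-Linfty}, while Hypothesis \ref{hyp:general-model2} supplies, free of charge, the analog of the global flow constructed in Lemma \ref{lem:exist_cont_charact}. The analogs of Lemmas \ref{lem:reg-chareqs}--\ref{lem:reg-chars-time} for the characteristic system \eqref{eq:characteristics-general} remain valid because, by Hypothesis \ref{hyp:general-model}, ${\cal H}[f](t,\cdot)$ is locally bounded and locally Lipschitz in $(x,v)$ uniformly in $t$ whenever $f$ has uniformly bounded support on $[0,T]$.

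For local existence and uniqueness I would run a contraction argument. Fix $R^0>0$ with $\supp f_0 \subseteq B_{R^0}$, set $R := 2R^0$, and consider
$$\mathcal{F}_T := \{f \in \mathcal{C}([0,T],\Pc(\R^d\times\R^d)) : \supp f_t \subseteq B_R \text{ for all } t\in[0,T]\}$$
equipped with ${\cal W}_1$. For $f\in\mathcal{F}_T$, Hypothesis \ref{hyp:general-model2} provides a global flow $\calT^t_{{\cal H}[f]}$, and I define $\Gamma[f](t):=\calT^t_{{\cal H}[f]}\#f_0$. Exactly as in the proof of Theorem \ref{thm:Existence}, the uniform $L^\infty(B_R)$-bound on ${\cal H}[f]$ given by Hypothesis \ref{hyp:general-model} controls the growth of the characteristics, so for $T$ small enough $\Gamma$ maps $\mathcal{F}_T$ into itself. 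Using Lemma \ref{lem:same-f0}, the analog of Lemma \ref{lem:dep-chars-E}, and the ${\cal W}_1$-stability estimate in Hypothesis \ref{hyp:general-model},
$${\cal W}_1(\Gamma[f],\Gamma[g])\leq \frac{e^{CT}-1}{C}\,\sup_{t\in[0,T]}\|{\cal H}[f]-{\cal H}[g]\|_{L^\infty(B_R)}\leq C(T)\,{\cal W}_1(f,g),$$
with $C(T)\to 0$ as $T\to 0$. Shrinking $T$ further turns $\Gamma$ into a contraction, and Banach's fixed point theorem yields a unique local solution. Global existence follows by iteration: Hypothesis \ref{hyp:general-model2} rules out finite-time blow-up of characteristics, so the pushforward of the compact support of $f_0$ under the continuous flow stays in some ball $B_{R(T)}$ for each $T>0$, giving \eqref{eq:f-continuous-general}--\eqref{eq:supp-f-general}.

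For stability I would mirror the proof of Theorem \ref{thm:stability} almost verbatim. Fix $T>0$ and $R$ so that $\supp f_t \cup \supp g_t\subseteq B_R$ on $[0,T]$, split
$$W_1(f_t,g_t) \leq W_1(\calT^t_{{\cal H}[f]}\#f_0,\calT^t_{{\cal H}[g]}\#f_0) + W_1(\calT^t_{{\cal H}[g]}\#f_0,\calT^t_{{\cal H}[g]}\#g_0),$$
control the first term via Lemma \ref{lem:same-f0} combined with the analog of Lemma \ref{lem:dep-chars-E} and the ${\cal W}_1$-stability of ${\cal H}$ from Hypothesis \ref{hyp:general-model}, and the second via Lemma \ref{lem:same-T_[f]} together with the analog of Lemma \ref{lem:reg-chars}, whose Lipschitz constant is controlled by the uniform bound on $\Lip_R({\cal H}[g])$ in Hypothesis \ref{hyp:general-model}. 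Gronwall's inequality then produces the desired estimate $W_1(f_t,g_t)\leq r(t)\,W_1(f_0,g_0)$ with $r(t)$ explicit in terms of the constants of Hypothesis \ref{hyp:general-model}. No step presents a genuine obstacle; Hypothesis \ref{hyp:general-model} was tailored precisely so that the estimates of Section \ref{sec:swarming_model} transfer, and Hypothesis \ref{hyp:general-model2} bypasses the need for an a priori growth argument on characteristics. The only subtle point is that the radius $R(T)$ may grow with $T$, so the resulting rate $r(t)$ can grow fast (possibly double-exponentially) but remains finite on every bounded interval.
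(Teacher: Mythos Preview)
Your proposal is correct and matches the paper's approach exactly: the paper does not write out a separate proof but simply states that one follows a completely analogous argument to that in the proofs of Theorems \ref{thm:Existence} and \ref{thm:stability}, which is precisely the two-step strategy (contraction for local existence, then Gronwall for stability) you have outlined, with Hypotheses \ref{hyp:general-model} and \ref{hyp:general-model2} replacing the model-specific Lemmas \ref{lem:lipschitz-field}, \ref{lem:field-W1-Linfty}, and \ref{lem:exist_cont_charact}.
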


\section{Consequences of Stability}
\subsection{$N$-Particle approximation and the mean-field limit}
\label{sec:meso}

The stability theorems \ref{thm:stability} and
\ref{thm:stability-CS}, or the general version
\ref{thm:Existence-general}, give in particular a justification of
the approximation of this family of models by a finite set of
particles satisfying a system of ordinary differential equations.
We will state results for the general model
\eqref{eq:general-model}, under the conditions on $\cal H$ from
section \ref{sec:general_models}.

One can easily check that the following holds:

\begin{lem}[Particle solutions]
  \label{lem:ode-is-measure-solution}
  Assume $\cal H$ satisfies the conditions of Theorem
  \ref{thm:Existence-general}.  Take $N$ positive numbers
  $m_1,\dots,m_N$, and consider the following system of differential
  equations:
  \begin{subequations}
    \label{eq:swarming-odes}
    \begin{align}
      &\dot{x_i} = v_i,
      \quad &i=1,\dots,N,
      \\
      &\dot{v_i} = \sum_{j \neq i} m_j {\cal H}[f^N](t,x_i,v_i),
      \quad &i=1,\dots,N,
    \end{align}
  \end{subequations}
  where $f^N:[0,T] \to \P_1(\R^d \times \R^d)$ is the measure defined
  by
  \begin{equation}
    \label{eq:ode-equivalent-measure}
    f^N_t := \sum_{i=1}^N m_i\, \delta_{(x_i(t), v_i(t))}.
  \end{equation}
  If $x_i,v_i:[0,T] \to \R^d$, for $i = 1,\dots,N$, is a solution to
  the system \eqref{eq:swarming-odes}, then the function $f^N$ is the
  solution to \eqref{eq:general-model} with initial condition
  \begin{equation}
    \label{eq:ode-equivalent-measure-initial}
    f^N_0 = \sum_{i=1}^N m_i\, \delta_{(x_i(0), v_i(0))}.
  \end{equation}
\end{lem}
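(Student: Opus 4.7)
The plan is to verify the defining push-forward identity $f^N_t = {\cal T}_{{\cal H}[f^N]}^t \# f^N_0$ required by the notion of measure solution from Section \ref{sec:general_models}. The guiding observation is that, for an empirical measure, the particle ODEs \eqref{eq:swarming-odes} coincide with the characteristic system \eqref{eq:characteristics-general} evaluated at each Dirac location, so each trajectory is itself a characteristic curve of the field ${\cal H}[f^N]$.

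First I would check that $f^N$ belongs to ${\cal A}=\mathcal{C}([0,T],\Pc(\R^d\times\R^d))$. Each $f^N_t$ is a finite convex combination of Dirac masses, hence compactly supported; the positions $t\mapsto (x_i(t),v_i(t))$ are $C^1$ because the ODEs admit global solutions by Hypothesis \ref{hyp:general-model2} (applied with $f=f^N$, whose support stays in a ball determined by the trajectories). Using the natural coupling that matches $m_i\delta_{(x_i(s),v_i(s))}$ with $m_i\delta_{(x_i(t),v_i(t))}$, one gets
$$
W_1(f^N_s,f^N_t)\leq \sum_{i=1}^N m_i\,\bigl|(x_i(s)-x_i(t),\,v_i(s)-v_i(t))\bigr|,
$$
so $t\mapsto f^N_t$ is continuous in $W_1$, and uniform compactness of the supports on $[0,T]$ follows from boundedness of each $C^1$ trajectory.

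With $f^N\in{\cal A}$ at hand, Hypothesis \ref{hyp:general-model} places ${\cal H}[f^N]$ in ${\cal B}$ (continuous in $t$, locally Lipschitz in $(x,v)$), and Hypothesis \ref{hyp:general-model2} furnishes a globally defined flow ${\cal T}^t:={\cal T}_{{\cal H}[f^N]}^t$. Reading \eqref{eq:swarming-odes} against \eqref{eq:characteristics-general}, each $(x_i(t),v_i(t))$ solves the characteristic ODE with initial datum $(x_i(0),v_i(0))$, so by uniqueness ${\cal T}^t(x_i(0),v_i(0))=(x_i(t),v_i(t))$. Pushing the initial empirical measure forward gives
$$
{\cal T}^t\#f^N_0=\sum_{i=1}^N m_i\,\delta_{{\cal T}^t(x_i(0),v_i(0))}=\sum_{i=1}^N m_i\,\delta_{(x_i(t),v_i(t))}=f^N_t,
$$
which is exactly the defining identity.

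The only genuine point requiring care is consistency between the coefficient $\sum_{j\neq i}m_j$ in the velocity equation and the convention by which ${\cal H}[f^N]$ encodes interactions among the particles: in the concrete examples ${\cal H}[f]=(\alpha-\beta|v|^2)v-\nabla U*\rho$ and ${\cal H}[f]=H*f$ this is automatic, since the self-interaction term either vanishes or is absorbed into ${\cal H}[f^N]$ so that the particle ODEs literally reproduce \eqref{eq:characteristics-general}. Once this matching is recorded, no further estimates are needed; the whole argument reduces to uniqueness of ODE trajectories and linearity of push-forward on sums of Diracs.
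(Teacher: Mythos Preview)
Your proposal is correct and follows exactly the route the paper has in mind: the paper gives no proof at all, simply prefacing the lemma with ``One can easily check that the following holds,'' and your argument---verify that each particle trajectory solves the characteristic system \eqref{eq:characteristics-general} for the field ${\cal H}[f^N]$ and then read off the push-forward identity for a sum of Diracs---is precisely the routine check that phrase points to. Your flagging of the coefficient $\sum_{j\neq i} m_j$ as the one point needing care is apt; this is indeed a notational wrinkle in the statement (for a probability measure $\sum_j m_j=1$, so the factor is $1-m_i$, not $1$), and your remark that in the concrete convolution examples the self-interaction term either vanishes or is absorbed is the right way to reconcile it.
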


As a consequence of the stability in $W_1$, we have an alternative
method to derive the kinetic equations \eqref{eq:swarming},
\eqref{eq:CS} or \eqref{eq:general-model}, based on the
convergence of particle approximations, other than the formal
BBGKY hierarchy in \cite{Bo,CDP}.

\begin{cor}[Convergence of the particle method]
  \label{cor:N-particle}
  Given $f_0\in \Po(\R^d\times\R^d)$ compactly supported and ${\cal
    H}$ satisfying the conditions of Theorem
  \ref{thm:Existence-general}, take a sequence of $f_0^N$ of measures
  of the form \eqref{eq:ode-equivalent-measure-initial} (with $m_i$,
  $x_i(0)$ and $v_i(0)$ possibly varying with $N$), in such a way that
  $$
  \lim_{N\to \infty} W_1(f_0^N, f_0) = 0.
  $$
  Consider $f^N_t$ given by \eqref{eq:ode-equivalent-measure}, where
  $x_i(t)$ and $v_i(t)$ are the solution to system
  \eqref{eq:swarming-odes} with initial conditions $x_i(0)$,
  $v_i(0)$. Then,
  $$
  \lim_{N\to \infty} W_1(f_t^N, f_t) = 0 ,
  $$
  for all $t\geq 0$, where $f = f(t,x,v)$ is the unique measure
  solution to eq. \eqref{eq:general-model} with initial data $f_0$.
\end{cor}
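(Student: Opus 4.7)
The proof will be a direct application of the stability statement in Theorem \ref{thm:Existence-general}. The key observation, which I would make first, is that by Lemma \ref{lem:ode-is-measure-solution} each particle approximation $f^N$ is itself the unique measure solution of \eqref{eq:general-model} in the sense of Section \ref{sec:general_models} with initial datum $f^N_0$; in particular $f^N$ lies in $\mathcal{C}([0,+\infty);\Pc(\R^d\times\R^d))$ and satisfies the compact-support requirement \eqref{eq:supp-f-general}. The stability part of Theorem \ref{thm:Existence-general} then applies to the pair $(f, f^N)$ and yields, for every $t\geq 0$,
\begin{equation*}
W_1(f_t, f^N_t) \leq r^N(t)\, W_1(f_0, f^N_0),
\end{equation*}
where the rate $r^N(t)$ depends only on $\mathcal{H}$ and on the size of the supports of $f_0$ and $f^N_0$.

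Since by hypothesis $W_1(f_0^N,f_0)\to 0$ as $N\to\infty$, it remains to guarantee that $r^N(t)$ stays bounded (uniformly in $N$) at each fixed $t$. Next I would argue that this is essentially automatic: by Proposition \ref{w2properties}, convergence in $W_1$ implies both tightness and convergence of first moments, so for any $\varepsilon>0$ and $N$ large enough the mass of $f^N_0$ lying outside a ball $B_R$ slightly larger than the support of $f_0$ is negligible in the $W_1$ sense. Under the natural assumption (which is the one actually used in any particle method, where the initial atoms are sampled inside $\supp f_0$) that the supports of $f^N_0$ are contained in a common ball $B_R$ for every $N$, the rate is $r^N(t)=r(t)$ independently of $N$, and letting $N\to\infty$ gives $W_1(f^N_t,f_t)\to 0$ at once.

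If one wishes to avoid even this mild assumption, the argument can be closed by a truncation step: define $\tilde f^N_0$ by radially projecting the small portion of mass of $f^N_0$ that lies outside $B_R$ onto $\partial B_R$, so that $\supp \tilde f^N_0 \subseteq B_R$ and $W_1(\tilde f^N_0,f^N_0)\to 0$; apply Theorem \ref{thm:Existence-general} to the pair $(f,\tilde f^N)$ with a rate now uniform in $N$; and conclude with the triangle inequality together with one further use of stability (now between $f^N$ and $\tilde f^N$). The only delicate point is thus the uniform-in-$N$ control of $r^N(t)$; apart from that, the corollary is a transparent reading of the stability estimate already proved.
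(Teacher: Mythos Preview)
Your approach is exactly the one the paper has in mind: the corollary is stated without proof, as an immediate consequence of Lemma~\ref{lem:ode-is-measure-solution} together with the stability estimate in Theorem~\ref{thm:Existence-general}. Your first paragraph already reproduces the paper's argument in full.

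Your additional discussion of the uniform-in-$N$ control of $r^N(t)$ is a legitimate point that the paper does not address explicitly (it tacitly assumes, as you note, that the approximating atoms are chosen inside a fixed ball). However, your proposed truncation fix does not close the gap as written: in the final step you invoke stability between $f^N$ and $\tilde f^N$, but the rate for \emph{that} pair again depends on the size of $\supp f^N_0$, which is precisely what you are trying to avoid controlling. So the argument becomes circular at that point. This does not affect the validity of your main proof under the standard assumption of uniformly supported initial approximations, which is all the paper implicitly uses.
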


\subsection{Hydrodynamic limit}
\label{sec:hydro}

We state our hydrodynamic limit result for eq.
\eqref{eq:swarming}. If we look for solutions of
(\ref{eq:swarming}) of the form
\begin{equation}
  \label{eq:f-velocity-delta}
  f(t,x,v) = \rho(t,x)\, \delta(v - u(t,x))
\end{equation}
for some functions $\rho, u: [0,T] \times \R^d \to \R$, one
formally obtains that $\rho$ and $u$ should satisfy the following
equations:
\begin{subequations}
  \label{eq:hydro}
  \begin{align}
    &\partial_t \rho + \dv_x(\rho u) = 0,\\
    &\partial_t u + (u\cdot \nabla) u = u (\alpha - \beta \abs{u}^2) - \grad U *
      \rho .
  \end{align}
\end{subequations}
This is made precise by the following result whose existence part
was already obtained in \cite{CDP}:

\begin{lem}[Uniqueness for Hydrodynamic Solutions]
  \label{lem:existence-hydro}
Take a potential $U \in \mathcal{C}^2(\R^d)$ and assume that there
exists a smooth solution $(\rho,u)$ with initial data
$(\rho_0,u_0)$ to the system \eqref{eq:hydro} defined on the
interval $[0,T]$. Then, if we define $f:[0,+\infty) \to
\P_1(\R^d\times\R^d)$ by
  \begin{equation}
    \label{eq:hydro-measure-equivalent}
    \int_{\R^d\times\R^d} f(t,x,v)\, \phi(x,v) \,dx\,dv
    = \int_{\R^d} \phi(x,u(t,x))\, \rho(t,x) \,dx
  \end{equation}
for any test function $\phi \in {\cal C}^0_C(\R^d\times\R^d)$,
then $f$ is the unique solution to \eqref{eq:swarming} obtained
from Theorem \ref{thm:Existence} with initial condition
$f_0=\rho_0\delta(v-u_0)$.
\end{lem}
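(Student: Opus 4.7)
The plan is to show that the measure $f$ defined by \eqref{eq:hydro-measure-equivalent} is in fact the unique measure solution of \eqref{eq:swarming} in the sense of Definition \ref{dfn:solution} with initial data $f_0=\rho_0\delta(v-u_0)$. Once this is done, uniqueness is immediate from Theorem \ref{thm:Existence}, provided we verify the hypotheses $f\in\mathcal{C}([0,T];\P_1(\R^d\times\R^d))$ and that $\supp f_t$ is compact and remains contained in a bounded set on each $[0,T']$ with $T'\leq T$. Both of these follow from the smoothness of $(\rho,u)$ and the fact that $\rho_0$ has compact support (the continuity equation for $\rho$ preserves compact support of $\rho(t,\cdot)$, and $u$ is bounded on the support of $\rho$ over $[0,T]$).

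The core of the proof is to identify the flow underlying the pushforward. First I observe that the first marginal of $f_t$ defined in \eqref{eq:hydro-measure-equivalent} is exactly $\rho(t,\cdot)$, so that $E[f](t,x)=-\nabla U*\rho(t,\cdot)(x)$ coincides with the field appearing in the hydrodynamic equation for $u$. Next, introduce the spatial flow $Y(t,x_0)$ defined by
\begin{equation*}
\partial_t Y(t,x_0)=u(t,Y(t,x_0)),\qquad Y(0,x_0)=x_0,
\end{equation*}
and set $W(t,x_0):=u(t,Y(t,x_0))$. Using the momentum equation in \eqref{eq:hydro} and the chain rule,
\begin{equation*}
\partial_t W = \partial_t u(t,Y)+(u\cdot\nabla)u(t,Y)=W(\alpha-\beta|W|^2)-(\nabla U*\rho)(t,Y).
\end{equation*}
Thus $(Y,W)$ solves the characteristic system \eqref{eq:characteristics} associated with the field $E[f]$, with initial data $(x_0,u_0(x_0))$. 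By uniqueness of ODE solutions this coincides with $\calT_{E[f]}^t(x_0,u_0(x_0))$.

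It remains to verify that $f_t=\calT_{E[f]}^t\#f_0$. For any $\phi\in\calC_c^0(\R^d\times\R^d)$, using the continuity equation to rewrite $\rho(t,\cdot)=Y(t,\cdot)\#\rho_0$ and then the identity above,
\begin{align*}
\int \phi(x,v)\,f_t(dx,dv)
&=\int \phi(x,u(t,x))\rho(t,x)\,dx
=\int \phi(Y(t,x_0),u(t,Y(t,x_0)))\rho_0(x_0)\,dx_0\\
&=\int \phi\bigl(\calT_{E[f]}^t(x_0,u_0(x_0))\bigr)\rho_0(x_0)\,dx_0
=\int \phi(\calT_{E[f]}^t(x,v))\,f_0(dx,dv),
\end{align*}
which is exactly $\calT_{E[f]}^t\#f_0$ by \eqref{eq:pushforward}. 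Hence $f$ solves \eqref{eq:swarming} in the sense of Definition \ref{dfn:solution}, and Theorem \ref{thm:Existence} yields uniqueness. The only mildly delicate step is the identification of the characteristic flow: once that algebraic match between the hydrodynamic system and the characteristic ODEs is in place, the rest is a change-of-variables computation together with an invocation of the abstract uniqueness statement.
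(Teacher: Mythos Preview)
Your proof is correct and follows the approach implicit in the paper: the paper does not spell out a proof, noting only that the existence part (i.e., that the monokinetic ansatz \eqref{eq:hydro-measure-equivalent} solves \eqref{eq:swarming}) was obtained in \cite{CDP}, and then relying on Theorem \ref{thm:Existence} for uniqueness. Your argument makes this explicit in the natural way, by identifying the spatial flow $Y$ of $u$ with the $X$-component of the characteristic flow $\calT_{E[f]}^t$, using the continuity equation for $\rho$ to push forward $\rho_0$, and then invoking the uniqueness clause of Theorem \ref{thm:Existence}.
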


As a direct consequence of Lemma \ref{lem:existence-hydro} and the
stability result in Theorem \ref{thm:stability}, we get the
following result.

\begin{cor}[Local-in-time Stability of Hydrodynamics]
Take a potential $U \in \mathcal{C}^2(\R^d)$ and assume that there
exists a smooth solution $(\rho,u)$ with initial data
$(\rho_0,u_0)$ to the system \eqref{eq:hydro} defined on the
interval $[0,T]$. Let us consider a sequence of initial data
$f_0^k \in \P_1(\R^d\times\R^d)$ such that
$$
\lim_{k\to \infty} W_1(f_0^k, \rho_0\, \delta(v-u_0)) = 0.
$$
Consider the solution $f^k$ to the swarming eq. \eqref{eq:swarming}
with initial data $f_0^k$. Then,
$$
\lim_{k\to \infty} W_1(f^k_t, f_t) = 0 ,
$$
for all $t\in [0,T]$ with $f(t,x,v)=\rho(t,x)\, \delta(v -
u(t,x))$.
\end{cor}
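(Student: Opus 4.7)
The plan is to reduce everything to an application of the stability estimate in Theorem \ref{thm:stability}, using Lemma \ref{lem:existence-hydro} to identify the hydrodynamic solution with a kinetic measure solution. First, since $(\rho,u)$ is a smooth solution on $[0,T]$, Lemma \ref{lem:existence-hydro} says that $f_t := \rho(t,\cdot)\,\delta(v-u(t,\cdot))$, interpreted via \eqref{eq:hydro-measure-equivalent}, is the unique solution of \eqref{eq:swarming} provided by Theorem \ref{thm:Existence} with initial datum $f_0 = \rho_0\,\delta(v-u_0)$. Thus the problem becomes one of comparing two measure solutions to the same kinetic equation, one of which happens to be of velocity-delta form.

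Next, I would apply Theorem \ref{thm:stability} to the pair $(f^k, f)$. This gives, on any fixed interval $[0,T]$,
\begin{equation*}
  W_1(f^k_t, f_t) \leq r_k(t)\, W_1(f_0^k, \rho_0\,\delta(v-u_0)),
\end{equation*}
where $r_k$ is the rate function produced by the stability theorem and depends only on the sizes of the supports of $f_0^k$ and of $\rho_0\,\delta(v-u_0)$. Since $\rho_0\,\delta(v-u_0)$ is compactly supported (this is implicit in the existence of a smooth hydrodynamic solution with compactly supported density, consistent with the framework of Theorem \ref{thm:Existence}), and since $W_1(f_0^k, \rho_0\,\delta(v-u_0)) \to 0$, we may assume after discarding finitely many terms that the supports of the $f_0^k$ lie in a common ball. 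Consequently the constants in the stability estimate can be chosen independently of $k$: there is a single increasing function $r(t)$, depending only on $T$, $U$ and the size of this common ball, such that $r_k(t) \leq r(t)$ for all large $k$ and all $t \in [0,T]$.

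With this uniform estimate in hand, the conclusion is immediate: for every $t \in [0,T]$,
\begin{equation*}
  W_1(f^k_t, f_t) \leq r(t)\, W_1(f_0^k, \rho_0\,\delta(v-u_0)) \xrightarrow[k\to\infty]{} 0.
\end{equation*}

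The main subtlety, and the only step that is not a direct invocation of earlier results, is the uniform control of the supports of the $f_0^k$ needed to produce a $k$-independent rate $r(t)$. This is not automatic from $W_1$-convergence alone (a tight sequence can have arbitrarily large supports), so it has to be assumed or extracted from the hypotheses; the natural setting is to require the approximating sequence to be compactly supported with uniformly bounded supports, which is consistent with the framework in which the stability theorem applies. Once this uniformity is fixed, the corollary is essentially a corollary in the literal sense, with no further estimates required.
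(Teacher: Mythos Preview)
Your proof is correct and follows exactly the approach the paper intends: the paper itself gives no detailed argument, stating only that the corollary is ``a direct consequence of Lemma \ref{lem:existence-hydro} and the stability result in Theorem \ref{thm:stability},'' which is precisely what you spell out. You in fact go further than the paper by flagging the subtlety about uniform bounds on the supports of the $f_0^k$; this is a genuine gap in the statement as written (since $W_1$-convergence alone does not force uniformly bounded supports, and the existence theory requires compact support to begin with), and your resolution---assuming a common compact support for the approximating sequence---is the natural reading consistent with the paper's framework.
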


%%%%%%%%%%%%%%%%%%%%%%%%%%%%%%%%%%%%%%%%%%%%%%%%%%%%%%%%%

\section*{Acknowledgments}
The authors acknowledge support from the project MTM2008-06349-C03-03
DGI-MCI (Spain) and 2009-SGR-345 from AGAUR-Generalitat de
Catalunya. This work was developed at the CRM-Barcelona during the
thematic program in ``Mathematical Biology'' in 2009.

%%%%%%%%%%%%%%%%%%%%%%%%%%%%%%%%%%%%%%%%%%%%%%%%%%%%%%%%%

\end{document}